\newtheorem{theorem}{Theorem}
\newtheorem{lemma}[theorem]{Lemma}
\newtheorem{corollary}[theorem]{Corollary}
\newtheorem{question}[theorem]{Question}
\newtheorem{conjecture}[theorem]{Conjecture}
\def\Cr{\operatorname{cr}} %
\newcommand{\Prob}[3]{\begin{center}
\renewcommand{\arraystretch}{0.85}%
\begin{tabularx}{\textwidth}{|lX|}
	\hline
	\multicolumn{2}{|l|}{\hspace{0pt}\rule{0pt}{13pt}\large\sc#1}\\
	{\bf Input:\enspace}&{#2}\\
	{\bf Question:\enspace}&{#3\rule[-6pt]{0pt}{6pt}} \\
	\hline
\end{tabularx}
\end{center}
}
\newcommand{\Ucr}{\ensuremath{\mathrm{unc}}}
\newcommand{\UcrProblem}{{\sc Uncrossed\-Number}}
\newcommand{\ecrProblem}{{\sc Edge\-Crossing\-Number}}
\newcommand{\mucsgProblem}{{\sc Maximum\-Uncrossed\-Subgraph}}
\tikzstyle{origV} = [circle,draw,fill=gray!42,inner sep=1pt,minimum size=13pt]
\tikzstyle{gadgetV} = [draw,inner sep=1pt,minimum size=13pt,scale=.8]
\tikzstyle{helperV} = [circle,fill=black,scale=.3]
\tikzstyle{centerV} = [circle,draw,fill=black,text=white,inner sep=0.5pt]
\tikzstyle{minihelperV} = [circle,draw=black,fill=white,scale=.25]
\tikzstyle{origE} = [thick,decorate]
\tikzstyle{gadgetE} = [thick]
\tikzstyle{splitE} = [line width=0.8pt,densely dashed]
\tikzstyle{bendRsmall} = [bend right=12,looseness=1.7]
\tikzstyle{bendRmed} = [bend right=25,looseness=1.7]
\tikzstyle{bendRmedbig} = [bend right=40,looseness=1.7]
\tikzstyle{bendRbig} = [bend right=85,looseness=1.5]
\tikzstyle{bendLsmall} = [bend left=12,looseness=1.7]
\tikzstyle{bendLmed} = [bend left=25,looseness=1.7]
\tikzstyle{bendLmedbig} = [bend left=40,looseness=1.7]
\tikzstyle{bendLbig} = [bend left=85,looseness=1.5]
\newcommand\radius{2.3}
\newcommand\boundingbox[1][1.3*\radius]{\path[use as bounding box] ($(#1,#1)$) rectangle ($(-#1, -#1)$);}
\newcommand\gateedge[2][]{%
	\draw[splitE,#1] #2 to (0,0);
}
\newcommand\origedge[5][black]{
	\draw[#1,#4,ultra thick] (#2) to (#3);
}
\newcommand\origedgeDetailed[5][black]{
	\foreach \pos in {#5}{
		\foreach \i in {-3.0,-1.8,-0.6,3.0}{
			\draw[#1,#4] (#2) -- ($(#2)!\pos!(#3)!0.1*\i cm!90:(#3)$) node [circle,fill=black,scale=0.25] {};
			\draw[black,#4] ($(#2)!\pos!(#3)!0.1*\i cm!90:(#3)$) -- (#3);
		}
		\foreach \i in {0.6,1.2,1.8}{
			\path        (#2) -- ($(#2)!\pos!(#3)!0.1*\i cm!90:(#3)$) node [circle,fill=black,scale=0.1] {} -- (#3);
		}
	}
}
\title{On the Uncrossed Number of Graphs\thanks{A conference short version of this paper will appear in the proceedings of the 32nd International Symposium on Graph Drawing and Network Visualization (GD 2024),
    Schloss Dagstuhl --- Leibniz-Zentrum für Informatik, %
    Germany, 2024, pp.\ 18:1--18:13~\cite{GD24}.
}}
\author[1]{Martin Balko\thanks{Supported by grant no.\ 23-04949X of the Czech Science Foundation (GA\v{C}R) and by the Center for Foundations of Modern Computer Science (Charles Univ. project UNCE 24/SCI/008).}}
\author[2]{Petr Hliněný}
\author[3]{Tomáš Masařík\thanks{Supported by the Polish National Science Centre SONATA-17 grant number 2021/43/D/ST6/03312.}}
\author[4]{Joachim Orthaber\thanks{Supported by the Austrian Science Fund (FWF) grant W1230.}}
\author[4]{Birgit~Vogtenhuber}
\author[5]{Mirko H.~{Wagner}}
\affil[1]{Department of Applied Mathematics, Faculty of Mathematics and Physics, Charles University, Czech Republic}
\affil[2]{Faculty of Informatics, Masaryk University, Brno, Czech Republic}
\affil[3]{Institute of Informatics, Faculty of Mathematics, Informatics and Mechanics, University of Warsaw, Poland}
\affil[4]{Institute of Software Technology, Graz University of Technology, Austria}
\affil[5]{Institute of Computer Science, Osnabrück University, Germany}
\date{}
\begin{document}

\maketitle

\begin{abstract}

Visualizing a graph $G$ in the plane nicely, for example, without crossings, is unfortunately not always possible.
To address this problem, Masařík and Hliněný [GD 2023] recently asked for each edge of $G$ to be drawn without crossings while allowing multiple different drawings of $G$.
More formally, a collection~$\mathcal{D}$ of drawings of $G$ is \emph{uncrossed} if, for each edge $e$ of $G$, there is a drawing in $\mathcal{D}$ in which $e$ is uncrossed.
The \emph{uncrossed number} $\Ucr(G)$ of $G$ is then the minimum number of drawings in some uncrossed collection of $G$.

No exact values of the uncrossed numbers have been determined yet, not even for simple graph classes.
In this paper, we provide the exact values for uncrossed numbers of complete and complete bipartite graphs, partly confirming and partly refuting a conjecture posed by Hliněný~and Masařík~[GD 2023].
We also present a strong general lower bound on $\Ucr(G)$ in terms of the number of vertices and edges of $G$.
Moreover, we prove \NP-hardness of the related problem of determining the \emph{edge crossing number} of a graph $G$, which is the smallest number of edges of $G$ taken over all drawings of $G$ that participate in a crossing.
This problem was posed as open by Schaefer in his book [Crossing Numbers of Graphs 2018].

\smallskip

\begin{center}
 \textbf{Keywords}:~~Uncrossed Number, Crossing Number, Planarity, Thickness.
\end{center}
  
\end{abstract}

\section{Introduction}

In a \emph{drawing} of a graph $G$, the vertices are represented by distinct points in the plane and each edge corresponds to a simple continuous arc connecting the images of its end-vertices.
As usual, we identify the vertices and their images, as well as the edges and the line segments representing them.
We require that the edges
pass through no vertices other than their endpoints. We assume for simplicity that any
two edges have only finitely many points in common, no two edges touch at an interior point
and no three edges meet at a common interior point.

A \emph{crossing} in a drawing $D$ of $G$ is a common interior point of two edges of $D$ where they properly cross.
For a drawing $D$ of a graph $G$, we say that an edge $e$ of $D$ is \emph{uncrossed} in $D$ if it does not share a crossing with any other edge of $D$.

There are two staple problems in the graph drawing field that defined the past eighty years of development in the area.
The first one, dating back to World War~II times~\cite{WWIIhistory,Zarankiewicz1955}, is the problem of determining the \emph{crossing number} $\Cr(G)$ of a graph $G$, defined as the smallest number of crossings required in any drawing of $G$ in the plane.
The crossing number problem has been intensively studied ever since, especially in the past thirty years.
Computing the crossing number is \NP-hard on general graphs~\cite{GareyJ83}, and one can find a thorough overview of the area in a recent book by Schaefer~\cite{Schaefer18}.

The second, only slightly newer problem, is that of determining the \emph{thickness} $\theta(G)$ of a graph $G$, defined as the smallest integer $k$ such that $G$ can be edge-partitioned into $k$ planar graphs.
This problem was proposed by Harary~\cite{Har61} in 1961 and since then this concept has played an important role in graph drawing. 
Unlike for planarity, deciding whether a graph is \emph{biplanar}, that is whether $\theta(G)\leq 2$, is \NP-complete~\cite{Mansfield}.
For an overview of the progress up to 1998, consult a survey by Mutzel, Odenthal, and Scharbrodt~\cite{ThickSurvey}.

In this paper, we investigate a very recent notion inspired by a fusion of both concepts into one.
We say that a collection $\mathcal{D}(G)$ of drawings of $G$ is \emph{uncrossed} if for each edge $e$ of $G$ there is at least one drawing in $\mathcal{D}(G)$ in which $e$ is uncrossed; see \Cref{fig-uncrossed} for an example.
Hliněný and Masařík~\cite{masHlin23}, in relation to extensions of the traditional crossing number of a graph, defined the \emph{uncrossed number} $\Ucr(G)$ of a graph $G$ as the smallest size of an uncrossed collection of drawings of $G$.
\begin {figure}[t]
  \centering
  \includegraphics{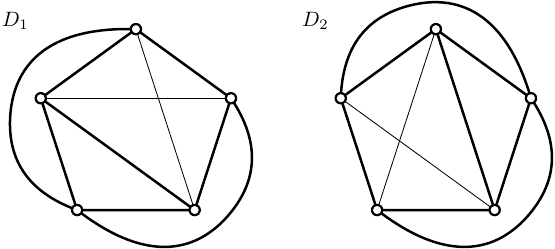}
  \caption {An uncrossed collection $\mathcal{D}(K_5) = \{D_1,D_2\}$ that shows $\Ucr(K_5) \leq 2$. The edges that are uncrossed are shown in thick lines.
  Since $K_5$ is not planar, we have $\Ucr(K_5) = 2$.}
  \label{fig-uncrossed}
\end {figure}
The motivation for the uncrossed number is that finding a handful of different drawings of a graph $G$ instead of just one ``flawless'' drawing shall highlight different aspects of $G$ and thus could be useful for the visualization of $G$, besides the theoretical interest.
The requirement that each edge is uncrossed in at least one drawing is a natural way to enforce that the drawings will highlight each part of the graph while still displaying the whole graph.

Let us also formulate the decision version of the problem of determining $\Ucr(G)$ of a given graph $G$.
\Prob{UncrossedNumber}{A graph $G$ and a positive integer $k$.}{Are there drawings $D_1,\ldots,D_k$ of $G$ such that, for each edge $e\in E(G)$, there exists an $i\in[k]$ such that $e$ is uncrossed in the drawing $D_i$?}

Clearly, for every graph $G$, we have
\begin{equation}
\label{eq-thickness}
\Ucr(G) \geq \theta(G),
\end{equation}
because the uncrossed edges in each drawing of an uncrossed collection of $G$ induce an edge-partition of $G$ into planar graphs.
However, this new concept significantly differs from thickness (which just partitions the edges of $G$) in the sense that all edges of $G$ have to be present along with the uncrossed subdrawing in each drawing of our uncrossed collection.
In fact, the requirements of an uncrossed collection bring us close to the related notion of the \emph{outerthickness} $\theta_o(G)$ of $G$, which is the minimum number of outerplanar graphs into which we can decompose $G$.

\Prob{Outerthickness}{A graph $G$ and a positive integer $k$.}{Can $G$ be decomposed into $k$ outerplanar graphs?}%

As noted by Hlin\v{e}n\'{y} and Masa\v{r}\'{i}k~\cite{masHlin23}, given a decomposition $\{G_1,\dots,G_k\}$ of $G$ into outerplanar graphs, we can let $D_i$ be an outerplanar drawing of $G_i$ with all remaining edges of $G$ being drawn in the outer face.
This gives us 
\begin{equation}
\label{eq-outerthickness}
\Ucr(G) \leq \theta_o(G)
\end{equation}
for every graph $G$.
Combining this with a result of Gon\c{c}alves~\cite{Goncalves2}, which implies $\theta_o(G)\leq2\theta(G)$, we actually obtain the following chain of inequalities
\begin{equation}
  \frac12\theta_o(G) \leq \theta(G) \leq \Ucr(G) \leq \theta_o(G) \leq 2 \theta(G).
\end{equation}

So far, the exact values of uncrossed numbers are not very well understood.
Masařík and Hliněný~\cite{masHlin23} exactly determined $\Ucr(G)$ of only a few sporadic examples of graphs $G$, such as $\Ucr(K_7)=3$.

\subsection{Our Results}

We determine the exact values of uncrossed numbers for specific and natural graph classes.
First, we derive a formula for the uncrossed number of complete graphs.
\begin{theorem}
\label{thm:complete}
For every positive integer $n$, it holds that %
\[
\Ucr(K_n) = 
\begin{cases}
    \lceil \frac{n+1}{4} \rceil, & \text{for } n \notin \{4,7\} \\
    3, & \text{for } n=7\\
    1, & \text{for } n=4.
\end{cases}
\]
\end{theorem}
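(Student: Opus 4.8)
The plan is to pin down $\Ucr(K_n)$ by squeezing it between a lower bound coming from a per-drawing edge count and the upper bound $\Ucr(K_n)\le\theta_o(K_n)$ supplied by \eqref{eq-outerthickness}. For the upper bound I would invoke the known value of the outerthickness of complete graphs, namely $\theta_o(K_n)=\lceil(n+1)/4\rceil$ for $n\neq 7$ and $\theta_o(K_7)=3$; together with \eqref{eq-outerthickness} this already gives the ``$\le$'' direction in every case except $n=4$, where instead I would simply note that $K_4$ is planar and hence $\Ucr(K_4)=1$. It then remains to establish the matching lower bounds.

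The core of the lower bound is a structural lemma about a single drawing. First I would observe that if $D$ is any drawing of $K_n$ and $H$ is the plane subgraph of $D$ consisting of the uncrossed edges, then every pair of vertices of $K_n$ lies on a common face of $H$: for an edge $uv$ of $K_n$, the arc representing $uv$ cannot cross any edge of $H$ (those being uncrossed by definition), so it stays inside the closure of a single face of $H$, forcing both $u$ and $v$ onto the boundary of that face. I would then prove that a plane graph on $n\ge 5$ vertices with this ``every pair shares a face'' property has at most $2n-2$ edges, the extremal example being the wheel on $n$ vertices (a hub joined to an $(n-1)$-cycle whose outside is one large face). Since the uncrossed subgraphs of the drawings in an uncrossed collection jointly cover all $\binom n2$ edges of $K_n$, summing this bound yields $\Ucr(K_n)\ge\lceil\binom n2/(2n-2)\rceil=\lceil n/4\rceil$.

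For $n\not\equiv 0\pmod 4$ one checks that $\lceil n/4\rceil=\lceil(n+1)/4\rceil$, so the lower bound already matches the upper bound; the one value where the formula itself fails, $n=7$, is exactly the sporadic result $\Ucr(K_7)=3$ of~\cite{masHlin23}, which I would cite directly (its upper bound also follows from $\theta_o(K_7)=3$). The genuinely delicate case is $n\equiv 0\pmod 4$ with $n\ge 8$, where the edge count gives only $n/4$ whereas the answer is $n/4+1$. Here I would show that $n/4$ drawings cannot suffice: such a collection could cover $E(K_n)$ only if each uncrossed subgraph had exactly $2n-2$ edges and the subgraphs partitioned $E(K_n)$. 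By the extremal characterization each subgraph is then a wheel spanning all $n$ vertices, and counting degrees, a vertex that is the hub in $h$ of these wheels and a rim vertex (degree $3$) in the remaining $n/4-h$ would satisfy $h(n-1)+3(n/4-h)=n-1$, i.e.\ $h=1/4$, which is not an integer. This contradiction rules out $n/4$ drawings and forces $\Ucr(K_n)\ge n/4+1=\lceil(n+1)/4\rceil$.

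The main obstacle I anticipate is the structural lemma, in two respects: establishing the clean bound $2n-2$ on the number of edges of a plane graph in which every two vertices share a face, and --- crucially for the $n\equiv 0\pmod 4$ refinement --- showing that equality is attained \emph{only} by wheels, so that the degree-counting contradiction applies to every extremal configuration and not merely to wheels. The edge bound itself should follow from Euler's formula together with the observation that every non-adjacent pair of $H$ must appear as a diagonal of some face of size at least $4$; pinning down the extremal graphs will require a more careful case analysis, since examples such as $K_{2,3}$ show that the ``shared face'' property is strictly weaker than outerplanarity. Finally I would dispatch the small cases $n\le 4$ by hand, matching $\Ucr(K_4)=1$ and the trivial planar values for $n\le 3$.
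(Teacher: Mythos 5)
Your proposal is correct, and it follows the paper's skeleton in most respects---the same upper bound via $\Ucr(K_n)\le\theta_o(K_n)$ plus planarity of $K_4$, the same citation of~\cite{masHlin23} for $K_7$, and the same base lower bound $\Ucr(K_n)\ge\lceil\binom{n}{2}/(2n-2)\rceil=\lceil n/4\rceil$---but it resolves the critical case $n\equiv 0\pmod 4$ by a genuinely different argument. The paper splits on whether some uncrossed subdrawing attains $2n-2$ edges; if so, it uses only the fact that such a subdrawing has a universal vertex (the wheel hub from \Cref{thm-ringel64}), deletes that vertex, and counts the remaining drawings against $h(K_{n-1})=2n-4$, leading to \eqref{eq-final}. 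You instead note that $n/4$ drawings could cover $E(K_n)$ only if every uncrossed set had exactly $2n-2$ edges and the sets were pairwise disjoint, hence formed a partition of $E(K_n)$ into spanning wheels; the hub count $h(n-1)+3(n/4-h)=n-1$ then forces $h=1/4$, a contradiction. Your degree-counting argument is valid and arguably cleaner in the tight case (no recursion to $K_{n-1}$ is needed, since the counting forces \emph{all} drawings to be extremal simultaneously), but it leans on the full wheel characterization of extremal drawings, whereas the paper's route needs only the weaker universal-vertex consequence of Ringel's theorem.

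One important remark: the ``main obstacle'' you anticipate---proving that a plane graph on $n\ge 5$ vertices in which every two vertices share a face has at most $2n-2$ edges, with wheels as the \emph{only} extremal configurations---is not something you need to prove. This is precisely Ringel's theorem~\cite{ringel64}, stated as \Cref{thm-ringel64} in the paper; the shared-face property is equivalent to realizability as the uncrossed edge set of a drawing of $K_n$ by the argument you sketch together with \Cref{lem-basicProperties}, so the uniqueness statement transfers directly to your setting. Citing it closes the only open step in your proposal; reproving it, especially the uniqueness part, would be substantial and unnecessary work.
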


We also find the exact formula for the uncrossed number of complete bipartite graphs.%

\begin{theorem}
\label{thm:completeBip}
For all positive integers $m$ and $n$ with $m \leq n$, it holds that %
\[
  \Ucr(K_{m,n}) = 
  \begin{cases}
    \lceil\frac{mn}{2m+n-2}\rceil, & \text{for }3 \leq m \leq n \leq 2m-2 \\
    \lceil\frac{mn}{2m+n-1}\rceil, & \text{for } n=2m-1 \\
    \lceil\frac{mn}{2m+n}\rceil, & \text{for } 6 \leq 2m \leq n \\
    1, & \text{for } m \leq 2.%
  \end{cases}
\]
\end{theorem}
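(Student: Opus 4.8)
The plan is to characterize, for a single drawing, which sets of edges can be simultaneously uncrossed, turn this into an extremal bound on the number of uncrossed edges per drawing, and then match that bound with explicit constructions. So first I would prove the following structural fact. Fix a drawing $D$ of $K_{m,n}$ with parts $A$ (of size $m$) and $B$ (of size $n$), and let $H$ be the plane subgraph formed by the uncrossed edges of $D$. Since an uncrossed edge is crossed by nothing, every other edge $ab$ of $K_{m,n}$ must be routed without meeting any edge of $H$; hence $ab$ lies inside the closure of a single face of $H$, and in particular $a$ and $b$ lie on the boundary of a common face. As every pair in $A\times B$ is an edge of $K_{m,n}$, the graph $H$ has the property that every $a\in A$ and every $b\in B$ lie on a common face; call such a plane bipartite graph \emph{good}. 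Conversely, every good plane bipartite graph on $A\sqcup B$ arises as the uncrossed subgraph of some drawing, by embedding it and routing the remaining edges inside the appropriate faces. Because the uncrossed subgraphs of any uncrossed collection $\{D_1,\dots,D_k\}$ together contain every edge, writing $M(m,n)$ for the maximum number of edges of a good graph gives $k\cdot M(m,n)\ge mn$, hence the lower bound $\Ucr(K_{m,n})\ge\lceil mn/M(m,n)\rceil$.

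The technical heart is to compute $M(m,n)$, and I expect this to be the main obstacle. I would bound a good graph $H$ using Euler's formula $|E(H)|=|V(H)|+|F(H)|-2$ together with three constraints on its faces: each face is even (bipartiteness), each face carries at most $m$ vertices of $A$ (there are only $m$ of them), and the faces must cover all $mn$ pairs, so the sum of squared half-lengths of the faces is at least $mn$. The tension is that covering all pairs forces several large ``full'' faces (those meeting all of $A$), while many edges force many short faces; balancing these yields $|E(H)|\le 2m+n$ in general, with the exact slack depending on how $n$ compares to $2m$. A separate, subtle point is a parity/nonplanarity correction: since $K_{m,n}$ is nonplanar for $m\ge 3$ and crossed edges occur only in crossing pairs, a good graph that is too close to spanning a planar $K_{m,n}$ cannot in fact be the full uncrossed set; this is what sharpens the bound to $2m+n-1$ when $n=2m-1$ and to $2m+n-2$ when $m\le n\le 2m-2$. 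Getting these additive constants right, and verifying that the extremal $H$ may be taken $2$-connected so that its faces are genuine cycles, is the delicate part.

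For the matching upper bounds I would exhibit, in each regime, an edge-decomposition of $K_{m,n}$ into $\lceil mn/M(m,n)\rceil$ good subgraphs; each part then furnishes one drawing in which exactly its edges are uncrossed. For $n\ge 2m$ the extremal good graph is cycle-based: a $2m$-cycle through all of $A$ and $m$ vertices of $B$ creates two full faces, into which one inserts one extra degree-$2$ vertex of $B$ per gap (without trapping the cycle vertices between two small faces) and then pendant vertices of $B$, reaching exactly $2m+n$ uncrossed edges; for $n\le 2m-2$ denser good graphs, with higher $B$-degrees, are needed. I would then distribute the edges of $K_{m,n}$ among the drawings by a cyclic round-robin scheme on the labels of $A$ and $B$, so that each drawing's uncrossed part is a rotation of the extremal good graph and the parts partition $E(K_{m,n})$. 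Ensuring simultaneously that every part is good and that the parts tile all $mn$ edges with the optimal number of classes is the combinatorial crux here.

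Finally I would dispose of the boundary and degenerate cases. For $m\le 2$ the graphs $K_{1,n}$ and $K_{2,n}$ are outerplanar, so \Cref{eq-outerthickness} gives $\Ucr(K_{m,n})=\theta_o(K_{m,n})=1$; the value $n=2m-1$ is singled out as its own regime precisely because the cycle-based construction just fails to fill its last gap, costing a single edge and producing the denominator $2m+n-1$; and a handful of small cases are checked by hand to confirm the ceilings. Throughout, the lower bound of the first two paragraphs and these constructions are shown to coincide, which establishes the stated formula.
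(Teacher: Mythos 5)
There is a genuine gap, and it sits exactly where you anticipated the ``technical heart'' would be: your plan reduces the lower bound to a single per-drawing extremal quantity $M(m,n)$, but no such reduction can prove the theorem in the regime $3 \leq m < n \leq 2m-2$. Your ``parity/nonplanarity correction'' claims that a good graph has at most $2m+n-2$ edges when $m \leq n \leq 2m-2$; this is false for $m<n$. Your notion of a good graph coincides with the uncrossed subdrawings of the paper, so $M(m,n)=h(K_{m,n})$, and by Mengersen's theorem (\Cref{thm-germanPaper}) we have $h(K_{m,n}) = 2m+n-1$ for all $m < n < 2m$ --- with equality attained, so good graphs with $2m+n-1$ edges genuinely exist throughout $m<n\le 2m-2$. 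Consequently the best bound your scheme can yield there is $\lceil mn/(2m+n-1)\rceil$, which is too weak: for $K_{5,6}$ it gives $\lceil 30/15\rceil = 2$, while the theorem asserts $\Ucr(K_{5,6}) = \lceil 30/14\rceil = 3$. So you would either be proving a false extremal statement or falling short of the stated formula.

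What is actually needed is an argument that is amortized over the whole collection rather than applied drawing by drawing. The paper proves a structural dichotomy (\Cref{lem-compBipStructure}): for $m<n$, any uncrossed subdrawing with at least $2m+n-1$ edges either contains a white vertex adjacent to \emph{all} black vertices, or is a subgraph of a double cycle with leaves, which forces $n \geq 2m-1$. Hence in the regime $n \le 2m-2$, a drawing that beats the $2m+n-2$ budget must spend $m$ of its edges on a universal white vertex $v$; the remaining drawings then only need to cover $K_{m,n-1}$ and each contributes at most $h(K_{m,n-1})$ new edges, and this global accounting (plus a small case analysis when $mn \equiv 1 \pmod{2m+n-2}$, where the trick is applied a second time) restores the denominator $2m+n-2$. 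Your upper-bound constructions for $n \ge 2m$ and $n = 2m-1$ are essentially the paper's (cyclically shifted double cycles with leaves), and for $n \le 2m-2$ the paper simply cites the outerthickness formula of Guy and Nowakowski, so that side of your plan is sound. One further small error: $K_{2,n}$ is planar but \emph{not} outerplanar for $n\ge 3$ (it contains $K_{2,3}$), so $\theta_o(K_{2,n})=2$ there; the conclusion $\Ucr(K_{2,n})=1$ holds for the simpler reason that planar graphs admit a crossing-free drawing.
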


Let us mention that the exact values of the thickness $\theta(K_{m,n})$ of complete bipartite graphs are not known for all values of $m$ and $n$;  see~\cite{poranMakin05} for further discussion.

We compare our formulas on $\Ucr(K_n)$ and $\Ucr(K_{m,n})$ with known formulas on the outerthickness of $K_n$ and $K_{m,n}$.
Hliněný and Masařík~\cite[Section 6]{masHlin23} conjectured the uncrossed numbers and outerthickness to be the same for both complete and complete bipartite graphs except in the planar but not outerplanar cases.
Guy and Nowakowski~\cite{guyNow90I,guyNow90II} showed that
\begin{equation}
\label{eq-outerthicknessKn}
\theta_o(K_n) = 
\begin{cases}
    \left\lceil \frac{n+1}{4} \right\rceil, & \text{for } n \neq 7 \\
    3, & \text{for } n=7
\end{cases}
\end{equation}
and 
\begin{equation}
\label{eq-outerthicknessKmn}
\theta_o(K_{m,n}) = \left\lceil \frac{mn}{2m+n-2} \right\rceil
\end{equation}
for all positive integers $m$ and $n$ with $m \leq n$. 
Note that it follows from \Cref{thm:complete,eq-outerthicknessKn} that $\Ucr(K_n)=\theta_o(K_n)$ for every $n\neq 4$.
For $n=4$, we have $\Ucr(K_4)=1$ while $\theta_o(K_4)=2$.
This confirms the conjecture of Hliněný and Masařík~\cite{masHlin23} in the case of complete graphs.

Since 
\[\left\lceil\frac{mn}{2m+n}\right\rceil\le  \left\lceil\frac{mn}{2m+n-2}\right\rceil = \left\lceil\frac{mn}{2m+n} + \frac{2mn}{(2m+n-2)(2m+n)} \right\rceil \leq \left\lceil\frac{mn}{2m+n}\right\rceil + 1\]
for $n \geq 2m-1>1$, it follows from \Cref{thm:completeBip} and \Cref{eq-outerthicknessKmn} that the uncrossed number $\Ucr(K_{m,n})$ differs from the outerthickness $\theta_o(K_{m,n})$ of $K_{m,n}$ by at most 1.
In particular, we have $\Ucr(K_{n,n})=\theta_o(K_{n,n})$ for every positive integer $n$.
However, Theorem~\ref{thm:completeBip} and~\eqref{eq-outerthicknessKmn} give, for example, $\Ucr(K_{4,7})= 2$ and $\theta_o(K_{4,7})=3$.
Since $K_{4,7}$ is not planar, this refutes the conjecture of Hliněný and Masařík~\cite{masHlin23} in the case of complete bipartite graphs.

\smallskip
Second, we turn our attention to general graphs and their uncrossed number.
We improve the trivial lower bound of $\Ucr(G) \geq \lceil m/(3n-6)\rceil$ for any graph $G$ with $n$ vertices and $m$ edges.
By carefully balancing between the numbers of edges in uncrossed subdrawings of~$G$ and the numbers of edges that can be drawn within faces of uncrossed subdrawings, we derive the following estimate.

\begin{theorem}
\label{thm-lowerBound}
Every connected graph $G$ with $n \geq 3$ vertices and $m \geq 0$ edges satisfies
\[
\Ucr(G) \geq \left\lceil\frac{m}{f(n,m)}\right\rceil
\]
where $f(n,m) = (3n-5+\sqrt{(3n-5)^2-4m})/2$.
\end{theorem}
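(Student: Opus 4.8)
The plan is to fix an optimal uncrossed collection $D_1,\dots,D_k$ of $G$ with $k=\Ucr(G)$ and, for each $i$, let $U_i$ be the set of edges uncrossed in $D_i$. Since every edge of $G$ is uncrossed in at least one drawing, $\bigcup_{i} U_i = E(G)$, so $\sum_i |U_i|\ge m$ and hence some drawing $D$ has at least $m/k$ uncrossed edges. Writing $u$ for this number, everything reduces to the single claim that
\[
u \le f(n,m)\qquad\text{for every drawing of }G,
\]
because then $k \ge m/u \ge m/f(n,m)$ and, as $k$ is an integer, $k\ge\lceil m/f(n,m)\rceil$. It is convenient to record that $f(n,m)$ is the larger root of the quadratic $x(3n-5-x)=m$, i.e.\ $m=f(n,m)\bigl(3n-5-f(n,m)\bigr)$, and that $f(n,m)\ge (3n-5)/2$.

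Next I would reduce the claim $u\le f(n,m)$ to the inequality $m \le u\,(3n-5-u)$. Indeed, if $u\le (3n-5)/2$ then $u\le f(n,m)$ holds trivially; and if $u> (3n-5)/2$, then since $x\mapsto x(3n-5-x)$ is decreasing to the right of its maximum and attains the value $m$ at $x=f(n,m)$, the inequality $m\le u(3n-5-u)$ forces $u\le f(n,m)$.

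To prove $m\le u(3n-5-u)$, I would analyse a single drawing with $u$ uncrossed edges. These edges form a plane graph $H$ with $n$ vertices and $u$ edges, and no crossed edge may meet an edge of $H$; hence each crossed edge is drawn inside a single face of $H$, joining two vertices on the boundary of that face. In the main case, where $H$ is a $2$-connected spanning subgraph, every face is bounded by a simple cycle, so a face whose boundary has length $\ell_F$ admits at most $\binom{\ell_F}{2}-\ell_F$ chords, and since $G$ is simple,
\[
m-u \le \sum_F\left(\binom{\ell_F}{2}-\ell_F\right)=\tfrac12\sum_F\ell_F(\ell_F-3).
\]
By Euler's formula $H$ has $u-n+2$ faces with $\sum_F\ell_F=2u$, so putting $x_F:=\ell_F-3\ge 0$ gives $\sum_F x_F = 2u-3(u-n+2)=3n-6-u=:t$. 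The convexity bound $\sum_F x_F^2\le(\sum_F x_F)^2=t^2$ then yields $m-u\le \tfrac12 t^2+\tfrac32 t$, and because $H$ is spanning and connected we have $u\ge n-1$, which is exactly the inequality $t+3\le 2u$ needed to conclude $m \le u + ut = u(3n-5-u)$.

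The step I expect to be the main obstacle is discharging the degenerate configurations in which $H$ is not a $2$-connected spanning plane graph. Isolated vertices or low-connectivity components make faces non-simply-connected and create extra chord slots (for instance, an isolated vertex lying inside a face can be joined to several boundary vertices), while cut vertices make facial walks repeat vertices. I would remove isolated vertices by induction on $n$: deleting a vertex isolated in $H$ removes none of the $u$ uncrossed edges but decreases $m$, so the inductive bound transfers; the remaining low-connectivity cases only decrease the number of available chord slots relative to the $2$-connected count, so the same Euler-plus-convexity estimate goes through using the general form $f=u-n+1+c$ of Euler's formula for a plane graph with $c$ components. Making this bookkeeping fully rigorous, while preserving the clean extremal identity $m\le u(3n-5-u)$, is the delicate part of the argument.
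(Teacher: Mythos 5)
Your reduction to the inequality $m \le u(3n-5-u)$ and your treatment of the main case are correct, and they are in essence the paper's own argument: the same per-face chord count, the identity $\sum_F \ell_F = 2u$, the same product bound on $\sum_F \ell_F(\ell_F-3)$, Euler's formula, and the quadratic solve. The genuine gap is exactly where you flag it, and your proposed patch does not work. The problematic case is when $H$, the plane graph of \emph{all} uncrossed edges, is disconnected, and this case cannot be discarded: for instance, let $G$ consist of a triangulation $T$ on $n-2$ vertices plus two adjacent vertices $x,y$ each with a few neighbors in $T$, drawn so that $T$ is plane, $xy$ is uncrossed inside a triangle of $T$, and all other edges at $x,y$ are crossed; then $u \ge 3(n-2)-6+1 = 3n-11 > (3n-5)/2$ for $n\ge 6$, while $H = T \cup \{xy\}$ is disconnected. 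Your claim that such configurations ``only decrease the number of available chord slots relative to the $2$-connected count'' is false: for the face $F$ of $H$ bounded by a triangle of $T$ together with the isolated edge $xy$ we have $\ell_F = 5$ (the edge $xy$ is traversed twice by the facial walk), $v(F)=5$ boundary vertices and $e(F)=4$ boundary edges, so the number of vertex pairs available for crossed edges is $\binom{5}{2}-4 = 6$, which exceeds $\tfrac12\ell_F(\ell_F-3) = 5$. So the per-face estimate underlying your Euler-plus-convexity computation breaks precisely in the disconnected case. Your isolated-vertex induction has the same problem one level down: it invokes the statement for $G-v$, which may be disconnected (and hence is not covered by the argument being inducted on), and it also relies on the unverified monotonicity $f(n-1,\,m-\deg_G(v)) \le f(n,m)$.

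The missing idea is a ``connectification'' step, which is the paper's \Cref{lem-basicProperties}. Since $G$ is connected and no edge of $G$ may cross an uncrossed edge, both endpoints of every edge of $G$ lie on a common face of $H$; hence, whenever $H$ is disconnected, one can add a crossing-free arc for some edge of $G$ joining two components of $H$ inside such a common face and re-complete the drawing, so that every uncrossed subdrawing is contained in a connected, spanning uncrossed subdrawing of a (possibly different) drawing of $G$. This reduces the whole proof to the connected spanning case, where your computation then goes through without any $2$-connectivity assumption: if $H$ is connected and not a tree, every face boundary contains a cycle, hence $e(F) \ge v(F)$, and the number of crossed edges inside $F$ is at most $\binom{v(F)}{2} - v(F) \le \tfrac12|F|\bigl(|F|-3\bigr)$, which is exactly the count the paper uses with facial walk lengths in place of cycle lengths; the tree case gives $u \le n-1 \le f(n,m)$ directly. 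Without such a lemma, your argument as written does not cover all drawings, so the proof is incomplete.
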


The bound from Theorem~\ref{thm-lowerBound} becomes interesting for $m \geq 3n-6$.
This is because we then have $f(n,m) \leq 3n-6$ for all integers $n \geq 2$ as 
\[\sqrt{(3n-5)^2-4m} \leq \sqrt{9n^2-42n+49} = 3n-7\] for any $m \geq 3n-6 \geq 0$.
It follows that the lower bound from \Cref{thm-lowerBound} is at least as good as $\Ucr(G) \geq \lceil m/(3n-6)\rceil$ for any connected $G$ with $n \geq 2$ vertices and $m \geq 3n-6$ edges.

The lower bound from \Cref{thm-lowerBound} gets stronger as the graph $G$ gets denser.
For example, if $G$ contains $n$ vertices and $\varepsilon n^2$ edges for $n$ sufficiently large and $\varepsilon \in (0,1/2)$, we get
\[f(n,m) = (3n-5+\sqrt{(9-4\varepsilon)n^2 -30n + 25})/2 \leq (3+\sqrt{9-4\varepsilon}) n/2.\]
Since $(3+\sqrt{9-4\varepsilon})/2 < 3$ for $\varepsilon > 0$, we obtain $\Ucr(G) \geq \left\lceil\frac{m}{c_\varepsilon n}\right\rceil$ for any $\varepsilon > 0$ and some constant $c_\varepsilon<3$, instead of trivial $\Ucr(G) \geq \left\lceil\frac{m}{3n-6}\right\rceil$.
We note that the best constant $c_\varepsilon$ obtainable from Theorem~\ref{thm-lowerBound} is $(3+\sqrt{7})/2 \sim 2.82$ as $\varepsilon \leq 1/2$.

\smallskip

We also consider computational complexity aspects related to the {\sc UncrossedNumber} problem. 
As we will see later, a closely related problem is the one of determining the \emph{edge crossing number} of a given graph $G$, which is the smallest number of edges involved in crossings taken over all drawings of $G$.
The notion of the edge crossing number is based on results by Ringel~\cite{ringel64}, Harborth and Mengersen~\cite{haMen74,haMen90}, and Harborth and Thürmann~\cite{haThu96}.

\Prob{\ecrProblem}{A graph $G$ and a positive integer $k$.}{Is there a drawing $D$ of $G$ with at most $k$ edges involved in crossings?}

The complementary problem to \ecrProblem{} is the following one.

\Prob{\mucsgProblem}{A graph $G$ and a positive integer $k$.}{Is there a drawing $D$ of $G$ with at least $k$ edges not involved in any crossings?}

In his monograph on crossing numbers, Schaefer~\cite{Schaefer18} mentions that the problem of determining the computational complexity of \ecrProblem{} is open.
Here, we resolve this open question by showing that the problem is \NP-complete.

\begin{theorem}
	\label{theorem:hard:ecr}
	The \ecrProblem{} problem is \NP-complete.
\end{theorem}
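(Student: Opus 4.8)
The plan is to prove both membership in \NP{} and \NP-hardness; since \mucsgProblem{} is the exact complement of \ecrProblem{}, it suffices to argue hardness for either, and I will phrase the reduction in whichever direction is cleaner. For membership, I would first show that a yes-instance always admits a polynomial-size witness drawing. Starting from any drawing $D$ of $G$ with at most $k$ crossed edges, I would pass to a drawing that minimizes, lexicographically, the number of crossed edges and then the total number of crossings. In such a drawing no edge crosses itself, adjacent edges do not cross, and any two edges cross at most once: each violation is removed by a local arc-swap that strictly decreases the number of crossings while never enlarging the set of crossed edges (the swap only reassigns, between the two swapped edges, their crossings with third edges). Hence the optimal drawing has at most $\binom{m}{2}$ crossings, so its combinatorial planarization --- the plane graph obtained by turning each crossing into a degree-$4$ dummy vertex --- has polynomial size. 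The certificate is this planarization together with its rotation system; a verifier checks in polynomial time that it is planar, that suppressing the dummy vertices recovers $G$, that each dummy vertex represents a crossing of two independent edges, and that at most $k$ edges of $G$ pass through a dummy vertex. This establishes $\ecrProblem{}\in\NP$.

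For hardness I would reduce from a known \NP-hard problem about planar subgraphs, exploiting the elementary fact that the edges left uncrossed in any drawing induce a planar subgraph; consequently $\ecr(G)\ge \mathrm{sk}(G)$, where $\mathrm{sk}(G)$ is the skewness of $G$ (the minimum number of edges whose deletion makes $G$ planar), and $m-\mathrm{sk}(G)$ equals the size of a maximum planar subgraph. The reduction transforms an instance $(H,\ell)$ of the maximum-planar-subgraph problem into a graph $G$ with a budget $k$, built from $H$ by attaching \emph{frame} and \emph{rigidity} gadgets. The frame gadget is a planar, essentially uniquely embeddable graph whose role is to force, in every cheap drawing, a fixed cyclic arrangement of the vertices of $H$ on the boundary of one common face; the rigidity gadgets (made of sufficiently many parallel planar copies) guarantee that they are themselves drawn without crossings in any cheap drawing, so that the only remaining freedom concerns the edges of $H$. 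With the frame in place, an edge of $H$ can be left uncrossed exactly when it belongs to a planar subdrawing compatible with the prescribed boundary order, while every discarded edge of $H$ is routed inside the common face, where it crosses only other discarded edges. Choosing $k$ as the number of gadget-free edges minus $\ell$ then makes the two decision questions equivalent.

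The main obstacle I expect is precisely this control of the geometry. One must guarantee that a cheap drawing of $G$ cannot \emph{cheat} by crossing a few frame edges in order to unlock a non-planar arrangement of $H$, and simultaneously that any genuinely large planar subgraph of $H$ can be realized together with a valid routing of all discarded edges in a single accessible face. Making the forward direction (large planar subgraph $\Rightarrow$ cheap drawing) work forces the preserved subgraph into an outerplanar-type embedding, which is why it may be cleaner to reduce from the maximum-outerplanar-subgraph problem, or to pre-process $H$ so that some optimal planar subgraph is always augmentable to an outerplanar-compatible one. For the reverse direction (cheap drawing $\Rightarrow$ large planar subgraph) the quantitative gap between $\ecr$ and skewness must be absorbed by taking the rigidity gadgets large enough that crossing any single gadget edge is strictly more expensive than any benefit it could yield, thereby pinning $\ecr(G)$ to the skewness of the relevant subinstance. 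Verifying these gadget inequalities, rather than the high-level correspondence, is where the real work lies, and completing them together with the \NP{} membership argument above yields \NP-completeness of \ecrProblem{}.
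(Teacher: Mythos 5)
Your \NP-membership argument is sound (the paper dismisses membership as trivial, so you actually supply more detail there), and your instinct to reduce from {\sc Maximum Outerplanar Subgraph} matches the paper's choice of source problem. However, the hardness part has a genuine flaw, beyond being admittedly unfinished: your frame gadget is specified to force a \emph{fixed} cyclic arrangement of the vertices of $H$ on the boundary of a common face. With a prescribed cyclic order, the question of which edges can simultaneously be drawn uncrossed becomes the problem of selecting a maximum non-crossing set of chords with respect to that order (maximum independent set in a circle graph), which is polynomial-time solvable --- and, more fatally for your reduction, a maximum outerplanar subgraph of $H$ need not admit any outerplanar embedding whose outer-face order agrees with the prescribed one. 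So the direction ``large outerplanar subgraph $\Rightarrow$ cheap drawing'' fails: your frame can lock out the very subgraph you are trying to realize. A correct gadget must force all vertices onto a common face of the uncrossed subdrawing while leaving the cyclic order completely free.

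The paper achieves exactly this with a construction that is much simpler than your frame-plus-rigidity scheme and that needs no protection from crossings at all. It adds a single apex vertex $c$ adjacent to every vertex of $G$ (the \emph{central star}) and replaces each original edge by $M=2|V|$ parallel paths of length two (\emph{$M$-bundles}), asking for a drawing with at most $Mk'+|V|$ crossed edges, where $k'=|E|-k$. The observation that dissolves your ``cheating'' worry is this: an uncrossed edge crosses \emph{nothing}, in particular no star edge; hence in any drawing, deleting $c$ together with its (possibly crossed) star edges leaves free corridors joining every original vertex to a single region, so all original vertices lie on a common face of the subdrawing formed by the surviving uncrossed bundle paths. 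Outerplanarity of the surviving subgraph follows automatically, with the cyclic order unconstrained, and the budget simply affords all $|V|$ star edges being crossed --- no rigidity is needed for the star. The $M$-bundles then do the quantitative work you assigned to your rigidity gadgets: destroying an original edge costs at least $M>|V|$ crossed edges, so at most $k'$ bundles can be destroyed and at least $k$ original edges survive; conversely, an outerplanar subgraph plus the apex is planar, so a cheap drawing can be built by routing the $\leq k'$ discarded edges so that each bundle path loses at most one edge. As written, your proposal is both incomplete (no gadgets constructed, no inequalities verified) and incorrect in its central structural claim; replacing the frame by the central star and carrying out this counting would repair it.
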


By the complementarity of the problems \mucsgProblem{} and \ecrProblem{}, we obtain the following result.

\begin{corollary}
	The \mucsgProblem{} problem is \NP-complete. 
\end{corollary}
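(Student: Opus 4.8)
The plan is to establish \NP-completeness of \mucsgProblem{} by relating it, in both directions, to \ecrProblem{}, exploiting the fact that the two problems are literally complementary with respect to the parameter~$k$. Let $G$ be a graph with $m = |E(G)|$ edges. In any drawing $D$ of $G$, every edge is either involved in a crossing or not; hence $D$ has at most $k$ edges involved in crossings if and only if it has at least $m-k$ edges involved in no crossing. Consequently, $(G,k)$ is a yes-instance of \ecrProblem{} exactly when $(G, m-k)$ is a yes-instance of \mucsgProblem{}, and the transformation $(G,k)\mapsto(G,m-k)$ is computable in polynomial time.

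First I would prove \NP-hardness of \mucsgProblem{}. Starting from an instance $(G,k)$ of \ecrProblem{}, which is \NP-complete by \Cref{theorem:hard:ecr}, I would output the instance $(G, m-k)$ of \mucsgProblem{}. The map $(G,k)\mapsto(G,m-k)$ is a polynomial-time many-one reduction, and its correctness is exactly the complementarity observation above, so \mucsgProblem{} is \NP-hard.

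Next I would argue membership in \NP. The same complementary map, now read as a reduction from \mucsgProblem{} to \ecrProblem{}, is again a polynomial-time many-one reduction; since \ecrProblem{} lies in \NP{} (being \NP-complete) and \NP{} is closed under polynomial-time many-one reductions, \mucsgProblem{} lies in \NP{} as well. If one prefers a self-contained certificate, it suffices to guess the subgraph $H$ formed by the claimed uncrossed edges together with a planar rotation system for $H$ and, for each remaining edge of $G$, a face of $H$ whose boundary contains both of its endpoints; this certifies that the remaining edges can be routed inside faces of $H$ without crossing any edge of $H$, so that at least $k$ edges are uncrossed. Such a certificate has polynomial size and is verifiable in polynomial time by computing the faces of the embedding and checking the cofaciality conditions.

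Combining the two directions yields that \mucsgProblem{} is \NP-complete. I do not expect a genuine obstacle here; the only point requiring slight care is the \NP-membership certificate, since a drawing may a priori have very many crossings, but this is handled either by the reduction to \ecrProblem{} or by the cofacial-routing certificate above, both of which avoid ever encoding an unbounded number of crossings.
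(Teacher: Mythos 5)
Your proposal is correct and matches the paper's argument: the paper derives this corollary precisely from the complementarity of \mucsgProblem{} and \ecrProblem{} (a drawing has at most $k$ crossed edges iff it has at least $m-k$ uncrossed ones), together with \Cref{theorem:hard:ecr}, which is exactly your two-way reduction $(G,k)\mapsto(G,m-k)$. Your extra self-contained \NP{}-certificate is fine as well (it mirrors the cofaciality argument of \Cref{lem-basicProperties}), though it is not needed once membership is inherited from \ecrProblem{}.
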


As a consequence of our reduction, we also obtain the following relative result. 
\begin{theorem}
\label{thm-hardness}
	If the {\sc Outerthickness} problem is \NP-hard, then also the \UcrProblem{} problem is \NP-hard.
\end{theorem}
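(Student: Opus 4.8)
The plan is to give a polynomial-time Karp reduction from {\sc Outerthickness} to \UcrProblem{}. The conceptual backbone is the classical characterization of outerplanarity by an apex: a graph $H$ is outerplanar if and only if the graph $H^{+}$, obtained from $H$ by adding one new \emph{apex} vertex adjacent to all vertices of $H$, is planar. I would exploit this to convert the (inner-)planarity freedom of uncrossed subdrawings into genuine outerplanarity. Concretely, given an instance $(G,k)$ of {\sc Outerthickness}, I would construct $G' = G^{+}$ by attaching an apex $u$ adjacent to every vertex of $G$, and then reinforce each apex edge $uv$ by a bundling/edge-splitting gadget (the same kind of gadget already used in the reduction behind \Cref{theorem:hard:ecr}) whose role is to force the whole apex star to be drawn without crossings in \emph{every} drawing of any small uncrossed collection. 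The output instance is $(G',k)$, up to an additive constant in the threshold discussed below, and the construction is clearly polynomial.

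For the easy direction I would show $\Ucr(G') \le \theta_o(G)$ in the spirit of \eqref{eq-outerthickness}. Starting from an edge-partition $E(G)=E(H_1)\cup\dots\cup E(H_k)$ into outerplanar graphs, for each $i$ I would draw $H_i^{+}$ planarly, which is possible exactly because $H_i$ is outerplanar, so that in the drawing $D_i$ the edges of $H_i$ and all apex edges are uncrossed; the remaining edges of $G$ are then added arbitrarily. Since the $H_i$ cover $E(G)$ and the apex edges are uncrossed in (say) $D_1$, the collection $\{D_1,\dots,D_k\}$ is uncrossed for $G'$. A minor technical point is that keeping the apex star uncrossed \emph{simultaneously} with covering all of $E(G)$ may cost one extra drawing; I would absorb this into the additive constant of the threshold, or avoid it by letting one $D_i$ host the apex star in its outer region while routing the leftover $G$-edges around it.

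The hard direction is where the reinforcement gadget is essential, and this is the step I expect to be the main obstacle. Given any uncrossed collection $\{D_1,\dots,D_{k}\}$ of $G'$, I want to read off an outerplanar edge-partition of $G$. The key claim is that the bundling gadget guarantees that in \emph{each} $D_i$ the entire apex star is uncrossed; granting this, the subgraph $F_i\subseteq G$ formed by the $G$-edges uncrossed in $D_i$, taken together with $u$ and its star, is planar, whence $F_i$ is outerplanar by the apex characterization. Assigning every edge of $G$ to one drawing in which it is uncrossed then yields an edge-partition of $G$ into at most $k$ outerplanar subgraphs, that is, $\theta_o(G)\le k$. The delicate part is proving the key claim: without reinforcement an adversary could deliberately cross a few apex edges to make some uncrossed $G$-subgraph non-outerplanar, hence potentially larger, and this is precisely the slack that makes $\Ucr(G)$ strictly smaller than $\theta_o(G)$ in general. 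I would therefore have to argue that breaking any apex edge in a drawing forces the uncrossed-covering of that gadget's many internal edges to spill over into additional drawings, so that crossing the star is never globally cheaper than respecting it. Verifying this amplification---that the bundle size can be chosen polynomially large enough to make apex crossings strictly unprofitable for the whole collection---is the crux, and it is exactly here that the gadget analysis developed for \Cref{theorem:hard:ecr} is reused.
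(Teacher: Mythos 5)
The crux you flag is a genuine gap, and the amplification you hope to reuse from \Cref{theorem:hard:ecr} provably does not transfer. That reduction works because \ecrProblem{} has a \emph{per-drawing budget}: all crossed edges are counted within a single drawing, so a fully crossed $M$-bundle blows the budget. \UcrProblem{} has no such budget; the only resource is the number of drawings, and every edge merely needs to be uncrossed in \emph{some} drawing. Consequently, no bundle size can make crossing the apex star ``globally unprofitable'': whatever gadget edges get crossed in the working drawings, an adversarial collection can cover \emph{all} gadget edges with a constant number of dedicated extra drawings (two suffice: in each, half of the bundles are drawn as a protected planar fan around the apex, and all other edges are routed with arbitrary crossings away from that fan). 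So crossing apex edges costs at most an additive constant in the number of drawings, independent of $M$, and your key claim --- that the apex star is uncrossed in \emph{every} drawing of the collection --- cannot be enforced. Worse, if it could be enforced, your easy direction would collapse: with a direct apex star uncrossed in every $D_i$ and $E(G_i)$ also uncrossed in $D_i$, the leftover edges of $G$ may cross only one another, so by \Cref{lem-basicProperties} their endpoints must share a face of the planar drawing formed by $G_i$ together with the star; this fails already for $G=K_n$ with $G_i$ a maximal outerplanar graph, since the star cuts the outer face of $G_i$ into sectors containing only consecutive vertex pairs. This tension --- forcing the star kills completeness, not forcing it kills your soundness --- is exactly the slack that makes $\Ucr$ and $\theta_o$ differ, and it cannot be absorbed into an additive constant.

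The paper resolves the tension differently, and the difference is essential rather than cosmetic. It attaches the central vertex $c$ by \emph{paths of length two} (a subdivided star) and never forces them to be uncrossed. Soundness follows from the star's mere \emph{presence}: in any drawing of the collection, the length-two paths connect every pair of vertices of $G$ while using no edge of $G$, so these paths cannot cross the uncrossed $G$-edges; hence all vertices of $G$ lie on one face of the uncrossed subdrawing restricted to $G$, which is therefore outerplanar --- no apex-planarity characterization and no gadget analysis are needed. Completeness exploits the subdivision: with $k\ge 2$ drawings, the star halves incident to $c$ are kept uncrossed in $D_1$ and the halves incident to $V(G)$ in the remaining drawings, so every star edge is covered even though the star is never uncrossed as a whole in any single drawing. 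If you want to salvage your write-up, replacing the reinforced apex by this subdivided, unforced star is the repair.
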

However, in contrast to the complexity of the {\sc thickness} problem, which was shown to be \NP-hard already in 1983 by Mansfield~\cite{Mansfield}, the complexity of the {\sc Outerthickness} problem remains open.

\section{Preliminaries}
\label{sec:prelim}

We may, without loss of generality, restrict to only simple graphs in the whole paper.
This is since, in each of the formulated problems, whenever an edge $e$ is a part of an uncrossed subdrawing (as discussed next),
any other edge parallel to $e$ can be drawn uncrossed closely along $e$, too.

Let $D'$ be a subdrawing of $D$ consisting of only uncrossed edges of $D$. Note that we do not require $D'$ to contain all such edges.
In this situation, we call $D'$ an \emph{uncrossed subdrawing} of $G$ and we say that it \emph{represents} the subgraph of $G$ formed by edges that are drawn in $D'$.  Formally, $D'$ is an uncrossed subdrawing of $G$ if there exists a drawing $D$ of a graph $G$ such that $D'$ is induced by a subset of the uncrossed edges of $D$.

\begin{lemma}
\label{lem-basicProperties}
Let $D'$ be an uncrossed subdrawing of a connected graph $G$.
Then $D'$ is a planar drawing and, for every edge $\{u,v\}$ of $G$, the vertices $u$ and $v$ are incident to a common face of~$D'$.
Moreover, there is an uncrossed subdrawing $D''$ of $G$ such that $D''$ represents a connected supergraph of the graph represented by $D'$.
\end{lemma}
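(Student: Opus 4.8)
The plan is to establish the three assertions in turn, using throughout that $D'$ is induced by a subset of the uncrossed edges of some drawing $D$ of $G$ and that $D'$ carries all the vertices of $G$. The first assertion, that $D'$ is a planar drawing, is immediate: if two edges of $D'$ crossed, then both would be involved in a crossing in $D$, contradicting that the edges of $D'$ are uncrossed in $D$. Hence $D'$ has no crossings, and since it inherits the remaining requirements of a valid drawing from $D$, it is planar.

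For the second assertion, I would fix an edge $\{u,v\}$ of $G$ and let $\gamma$ be the arc drawing it in $D$. If $\{u,v\}$ is an edge of $D'$, then $u$ and $v$ are incident to a face of $D'$ bordering this edge. Otherwise, the decisive observation is that $\gamma$ crosses no edge of $D'$: every edge of $D'$ is uncrossed in $D$ and thus shares no crossing with $\gamma$. Since $\gamma$ also meets no vertex besides $u$ and $v$, the open arc $\gamma \setminus \{u,v\}$ is a connected subset of the plane disjoint from the point set of $D'$, so it lies in a single face $f$ of $D'$; both $u$ and $v$, being limit points of this arc, are incident to $f$.

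For the third assertion, I would repeatedly join components. Consider the partition of $V(G)$ into the connected components of $D'$, with isolated vertices forming singletons. If $D'$ is connected we are done. Otherwise, since $G$ is connected and $D'$ carries all vertices of $G$, some edge $e = \{u,v\}$ of $G$ joins two distinct components; by the second assertion $u$ and $v$ lie on a common face $f$ of $D'$, so $e$ can be drawn as an arc inside $f$ disjoint from $D'$, giving a planar drawing $D' + e$ with fewer components. To see that $D' + e$ is again an uncrossed subdrawing, note that $e$ joins two different components and is therefore a bridge, so inserting it inside $f$ does not split $f$; it merely merges boundary components into a single face $f'$ and leaves every other face of $D'$ unchanged. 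Consequently any two vertices incident to a common face of $D'$ remain incident to a common face of $D' + e$, and by the second assertion this holds for the endpoints of every edge of $G$ not yet drawn. Each such edge can then be routed inside a suitable face without crossing $D' + e$ (edges placed in the same face may cross one another freely), producing a drawing of $G$ in which all edges of $D' + e$ are uncrossed. Iterating strictly decreases the number of components and terminates with the desired connected $D''$.

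The step I expect to be the main obstacle, and would argue most carefully, is the claim that inserting $e$ inside $f$ preserves the \emph{common face} property of the second assertion. Everything there rests on $e$ joining two distinct components, so that it acts as a bridge and does not separate $f$; granting this, the preservation of common faces and the termination of the iteration are routine.
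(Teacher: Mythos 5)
Your proof is correct, and on the second assertion it takes a genuinely different route from the paper. The paper proves the common-face property by contradiction, invoking the folklore fact that two vertices of a planar drawing fail to share a face exactly when some cycle of the drawing separates them, and then observing that the edge $\{u,v\}$, drawn in the ambient drawing $D$, would have to cross an edge of that cycle --- impossible since edges of $D'$ are uncrossed. You instead argue directly: the arc representing $\{u,v\}$ in $D$ can neither cross nor touch any edge of $D'$ (crossings are excluded by uncrossedness, touchings by the drawing conventions) nor pass through a vertex, so its interior is a connected set disjoint from the point set of $D'$; hence it lies in a single face, whose boundary contains both endpoints. This is more self-contained and produces the common face explicitly rather than via the separation criterion; both arguments are valid. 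For the planarity claim and the ``moreover'' part, your argument coincides with the paper's: induct on the number of components and join two of them by an edge of $G$ drawn inside the common face guaranteed by the second assertion. Where the paper simply asserts that, after inserting this edge, every edge of $G$ still has both ends on a common face of the enlarged subdrawing, you supply the justification: the inserted edge joins two distinct boundary components of the face (its ends lie in different components of $D'$), so it is non-separating, the face is not split, face-sharing is preserved, and the remaining edges of $G$ can be routed inside suitable faces (crossing one another freely) to certify that $D'+e$ is again an uncrossed subdrawing. That extra detail is precisely what the paper leaves implicit, and your handling of it is sound.
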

\begin{proof}
The drawing $D'$ is clearly planar, as, by the definition of $D'$, each edge of $D'$ is uncrossed in a drawing $D$ of $G$ and thus also in $D'$.
Moreover, it is a folklore fact that two vertices $u$ and $v$ in a planar drawing, here in $D'$, are {\em not} incident to a common face if and only if there exists a cycle $C\subseteq D'$ such that $u$ and $v$ are drawn on different sides of~$C$.
In the latter case, however, the edge $\{u,v\}$ would cross some edge of $C$ in $D$, which is impossible since no edge of $D'$ is crossed.

We prove the second part by induction on the number of connected components represented by $D'$.
The case of one component is trivial, as $D''=D'$.
Otherwise, since $G$ is connected, there exists an edge $e=\{u,v\}$ of $G$ that is not drawn in $D'$ and such that $u$ and $v$ belong to different components represented by $D'$.
By the first part of the lemma, the vertices $u$ and $v$ are incident to the same face of $D'$.
So, let $D^+$ be the planar drawing obtained from $D'$ by adding a crossing-free arc representing the edge $e$.
Clearly, every edge of $G$ is still incident to a common face of $D^+$, and so $D^+$ can be completed into a drawing of $G$ such that $D^+$ stays uncrossed.
The subgraph of $G$ represented by $D^+$ has fewer components than we started with, and so we find the desired $D''$ by induction.
\end{proof}

\begin {figure}[ht]
  \centering
\begin{tikzpicture}[scale=1]\small
\coordinate (uu) at (0,0);
\tikzstyle{every node}=[draw, color=black, shape=circle, inner sep=1.3pt, fill=white]
\foreach \u in {0,24,...,360} { \node[thick] (W\u) at (\u:2) {}; }
\node[thick] at (uu) {};
\tikzstyle{every path}=[draw, color=black, semithick]
\coordinate (W384) at (W24);
\begin{scope}[on background layer]
\foreach \u in {0,24,...,360} {   
        \draw (0,0)--(W\u);  \draw (uu)--(W\u);  \coordinate (uu) at (W\u);
}
\end{scope}[on background layer]
\end{tikzpicture}
  \caption {The wheel graph $W_{16}$.}
  \label{fig-W16}
\end {figure}

For a graph $G$, let $h(G)$ be the maximum number of uncrossed edges in some drawing $D$ of $G$. 
Let $DW_n$ be a planar drawing of the wheel graph $W_n$ on $n$ vertices; see~\Cref{fig-W16}.
Note that $DW_n$ is unique up to homeomorphism of the sphere and reflection as $W_n$ is 3-connected.
The following result by Ringel~\cite{ringel64} gives a formula for $h(K_n)$ for every integer $n \geq 4$,
and additionally claims that drawings of $K_n$ with the maximum number of uncrossed edges have a unique structure. 

\begin{theorem}[\cite{ringel64}]
\label{thm-ringel64}
For every integer $n \geq 4$, we have
$h(K_n) = 2n-2$.
Moreover, if $D$ is a drawing of $K_n$ with $2n-2$ uncrossed edges, then $D$ contains the drawing $DW_n$ with all edges from $D \setminus DW_n$ being drawn in the outer face of $DW_n$. 
\end{theorem}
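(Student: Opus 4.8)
The plan is to prove the two assertions separately: the exact value $h(K_n)=2n-2$ via matching lower and upper bounds, and then the structural uniqueness. For the lower bound I would exhibit a single drawing of $K_n$ with $2n-2$ uncrossed edges. Take the planar drawing $DW_n$ of the wheel, whose hub is joined to all $n-1$ rim vertices and whose rim is an $(n-1)$-cycle; this already contributes $(n-1)+(n-1)=2n-2$ edges. Every missing edge of $K_n$ joins two rim vertices, and since all rim vertices lie on the outer face of $DW_n$, I can route all these remaining edges inside that outer face. They may cross one another, but they never cross the wheel, so the $2n-2$ wheel edges stay uncrossed and $h(K_n)\ge 2n-2$.

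For the upper bound, let $D$ be any drawing of $K_n$ and let $H$ be the uncrossed subdrawing formed by its uncrossed edges. By \Cref{lem-basicProperties}, $H$ is a planar drawing in which every two vertices lie on a common face, and I may assume $H$ is connected (completing it to a connected uncrossed subdrawing only increases the edge count while preserving the common-face property). So the whole upper bound reduces to the following purely combinatorial claim: a connected plane graph on $n\ge 4$ vertices in which every pair of vertices is incident to a common face has at most $2n-2$ edges. Here a direct counting argument is not enough --- the common-face property only lower-bounds the face sizes through $\sum_F\binom{|V(F)|}{2}\ge\binom{n}{2}$, which does not cap the number of edges --- so I would run an extremal/structural argument instead. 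I would take $H$ with the maximum number of edges, first reduce to the $2$-connected case (so that every face boundary is a cycle), and redraw it so that a face with the most incident vertices becomes the outer face, with boundary cycle $C$ of length $b$. If $b=n$ then $H$ is outerplanar and has at most $2n-3$ edges; otherwise there is at least one interior vertex.

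The key mechanism, and the step I expect to be the main obstacle, is the analysis of interior vertices. Each interior vertex $w$ must share a face with all $n-1$ other vertices, so the faces incident to $w$ must collectively touch every vertex of $C$; I would argue that in an edge-maximal configuration this forces $w$ to be adjacent to all boundary vertices and all its incident faces to be triangles, and then that a \emph{second} interior vertex cannot coexist with this. This pins down $H$ as the wheel $W_n$ with a single hub, a chordless rim cycle of length $n-1$, and $2n-2$ edges, which simultaneously gives the bound $m\le 2n-2$ (equivalently $f\le n$ via Euler's formula $n-m+f=2$) and the uniqueness.

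A genuine subtlety to be careful with is that one may \emph{not} freely add chords to boost the edge count: inserting a diagonal into a large face splits it and can separate a pair of vertices whose only common face it was, destroying the common-face property. Hence the extremal analysis has to be carried out directly rather than by reduction to a triangulation. Finally, for the ``moreover'' part, since $W_n$ is $3$-connected its planar embedding $DW_n$ is unique up to homeomorphism of the sphere and reflection, the chordless $(n-1)$-gon is its only non-triangular face, and on the sphere this face must be the one carrying all the remaining (crossed) edges of $D$; thus $D$ contains $DW_n$ with $D\setminus DW_n$ drawn in its outer face.
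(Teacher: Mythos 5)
This statement is not proved in the paper at all: it is quoted as Ringel's theorem~\cite{ringel64}, so there is no in-paper proof to compare against. Judged on its own merits, your proposal gets the framing right but leaves the actual substance of the theorem unproven. The lower bound is complete and correct: drawing $DW_n$ and routing all chords of the rim in the outer face gives $h(K_n)\ge 2n-2$. The reduction of the upper bound, via \Cref{lem-basicProperties}, to the combinatorial claim that a connected plane graph on $n\ge 4$ vertices in which every two vertices are cofacial has at most $2n-2$ edges (with the wheel as the unique extremal configuration) is also sound, and your observation that one cannot freely add chords without destroying the cofacial property is exactly the right warning.

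The gap is that this combinatorial claim \emph{is} Ringel's theorem, and your treatment of it consists of statements of intent rather than arguments. Concretely: (i) the reduction to the $2$-connected case is asserted but never justified, and it cannot be obtained by the usual trick of adding edges to a cut vertex's blocks, precisely because of the subtlety you yourself raise --- added edges may separate a pair of vertices whose only common face they split; (ii) the central step, that an interior vertex $w$ ``is forced'' by edge-maximality to be adjacent to all boundary vertices with all incident faces triangles, has no mechanism behind it, again because edge-maximality yields no local augmentation moves once adding edges is forbidden (if $w$ and a boundary vertex $u$ share a face $F$ but are non-adjacent, inserting $wu$ into $F$ may kill the cofacial property, so maximality does not force the adjacency); and (iii) the exclusion of a second interior vertex is likewise only announced. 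These three steps are where all the case analysis of Ringel's proof lives, so as written the proposal proves the lower bound and correctly reduces the upper bound and uniqueness to an unproven structural lemma. The final ``moreover'' paragraph (uniqueness of the embedding of $W_n$ by $3$-connectivity, and the fact that every crossed edge, being unable to cross an uncrossed edge, lies inside a single face of $DW_n$ and hence in the outer face) is fine, but it rests on the unestablished claim that the extremal graph is the wheel.
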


We also mention an analogous result for the complete bipartite graphs $K_{m,n}$, derived by Mengersen~\cite{germanPaper}.

\begin{theorem}[\cite{germanPaper}]
\label{thm-germanPaper}
For all positive integers $m$ and $n$ with $m \leq n$, we have
\[
h(K_{m,n}) = 
\begin{cases}
    2m+n-2, & \text{for } m=n \\
    2m+n-1, & \text{for } m < n < 2m \\
    2m+n, & \text{for } 2m \leq n.
\end{cases}
\]
\end{theorem}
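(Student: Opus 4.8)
The plan is to translate the statement into a purely combinatorial extremal question and then prove matching lower and upper bounds. Write $A$ and $B$ for the two parts of $K_{m,n}$, with $|A|=m\le n=|B|$. By \Cref{lem-basicProperties}, the uncrossed edges of any drawing of $K_{m,n}$ form a planar subdrawing that represents a connected subgraph $H$ (we may add edges to make it connected), and every pair $a\in A$, $b\in B$ lies on a common face of this subdrawing. Conversely, if $H\subseteq K_{m,n}$ is a plane bipartite graph whose embedding has the property that every $a\in A$ and every $b\in B$ share a face, then routing each missing edge of $K_{m,n}$ through a common face of its endpoints completes $H$ to a drawing of $K_{m,n}$ in which all of $H$ is uncrossed. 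Hence $h(K_{m,n})$ equals the maximum number of edges of such a \emph{co-facial} plane bipartite graph on parts of sizes $m$ and $n$, and the whole theorem reduces to computing this maximum. I would dispose of the planar cases $m\le2$ separately and assume $m\ge3$ from now on.

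The key reformulation for both bounds is Euler's formula: a connected plane graph on the $m+n$ vertices with $E$ edges has exactly $F=E-(m+n)+2$ faces, so $E=(m+n-2)+F$ and maximizing the number of uncrossed edges is the same as maximizing the number of faces $F$ of a co-facial embedding. The three target values correspond precisely to $F=n$, $F=m+1$, and $F=m+2$ in the regimes $m=n$, $m<n<2m$, and $2m\le n$.

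For the lower bound I would build explicit co-facial graphs. The base construction places all $m+n$ vertices on a single (outer) face, i.e.\ takes a maximal bipartite outerplanar spanning subgraph of $K_{m,n}$; since all vertices are then co-facial, this already realizes $2m+n-2$ uncrossed edges and settles the case $m=n$. For $m<n$ there is slack in the outer boundary, and I would show that one additional interior face can be introduced---by giving one or two suitable $B$-vertices degree two so that they bound a new quadrilateral while every vertex remains visible from the outer face---gaining exactly one edge when $m<n<2m$ and two edges when $2m\le n$. Concretely, the extremal graphs have $B$-vertices of degree at most two (about $2m$ of degree two realizing a planar multigraph on $A$, the rest pendant), which is what caps the count at $2m+n+O(1)$.

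The upper bound is the main obstacle. One wants to show that co-faciality forces $F\le m+2$ (and $\le m+1$, $\le m$ in the tighter regimes), i.e.\ far fewer faces than the bipartite-planar maximum of $m+n-2$. The naive pair-counting inequality $\sum_\phi a(\phi)\,b(\phi)\ge mn$, where $a(\phi)$ and $b(\phi)$ count the $A$- and $B$-vertices on a face $\phi$, is too weak: a single large face can over-count pairs, so it does not bound $F$ from above. Instead I would argue structurally. Every pendant ($B$-)vertex lies on a single face, which must therefore contain all $m$ vertices of $A$; I would show that only a bounded number of faces can be such ``full'' faces carrying all of $A$ on their boundary, and analyse the remaining $B$-vertices through the block-cut tree of $H$ (the cut vertices are exactly what let an $A$-vertex appear several times on a face boundary and make the face counts unbalanced). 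Tracking how the $B$-vertices in excess of the outerplanar budget must be distributed among these few faces is where the distinction between the three regimes arises, and is the delicate, case-heavy part of the argument; matching it with the constructions above yields the stated formula.
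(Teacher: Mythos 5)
First, a framing point: the paper does not prove this statement at all---it is imported verbatim from Mengersen \cite{germanPaper} and used as a black box---so there is no in-paper proof to match; your attempt must stand on its own. Your opening reduction is correct and is the right frame: by \Cref{lem-basicProperties} (both directions, since missing edges can be routed inside a shared face, crossing only each other), $h(K_{m,n})$ equals the maximum number of edges of a connected plane bipartite subgraph of $K_{m,n}$ in which every pair $a\in A$, $b\in B$ is co-facial, and Euler's formula converts the three target values into face counts $F=m$, $F=m+1$, $F=m+2$. But already here there is a slip: gaining $t$ edges over the outerplanar bound $2m+n-2$ requires $t$ extra \emph{faces}, so ``one additional interior face\dots gaining two edges when $2m\le n$'' cannot be right; the correct $n\ge 2m$ witness is the paper's double cycle with leaves (\Cref{fig-doubleCycle,lem-2m}), which has $m$ quadrilateral faces plus an inner and an outer face, i.e.\ $F=m+2$.

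The genuine gaps are two. (i) The upper bound---the entire content of Mengersen's theorem---is never proved: ``only a bounded number of faces can carry all of $A$'' and ``analyse via the block-cut tree'' are statements of intent, and the thresholds at $n=2m-1$ and $n=2m$ are precisely what such an argument must generate; you explicitly defer this as the ``delicate, case-heavy part.'' (ii) Your structural picture of the extremal graphs (``$B$-vertices of degree at most two, about $2m$ of degree two, the rest pendant'') is false outside $n\ge 2m-1$, by a one-line count: if every white vertex has degree at most $2$ then $E\le 2n$, yet $2n<2m+n-1$ whenever $n<2m-1$, and $2n<2m+n-2$ for $m=n\ge 3$ (indeed even your base ladder has whites of degree $3$). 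In the regime $m\le n<2m-1$ the extremal co-facial graphs instead contain a white vertex adjacent to \emph{all} of $A$ (a generalized-wheel shape; e.g.\ for $m=3$, $n=4$ an alternating hexagon with a white hub realizes $9=2m+n-1$ edges)---this dichotomy ``dominating white vertex versus subgraph of a double cycle with leaves'' is exactly what the paper's \Cref{lem-compBipStructure} establishes for subdrawings with at least $2m+n-a$ edges. Your alternative, appending a quadrilateral to the outerplanar ladder, demonstrably breaks co-faciality for $m\ge 3$: the white vertex enclosed by the new quadrilateral no longer shares a face with the blacks outside it. So both the construction in the range $m<n<2m-1$ and the whole upper bound need to be rebuilt around that dichotomy; as it stands, the proposal proves only the easy direction $h(K_{m,n})\ge 2m+n-2$ in general and the exact value in none of the three regimes.
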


We are going to use the parameter $h(G)$ to estimate the uncrossed number of $G$.
Let $\{D_1,\dots,D_k\}$ be an uncrossed collection of drawings of a graph $G$ that has $m$ edges.
Since every drawing $D_i$ contains at most $h(G)$ edges that are uncrossed by any other edge in $D_i$, we immediately obtain the following lower bound
\begin{equation}
\label{eq-boundH}
\Ucr(G) \geq  \left\lceil \frac{m}{h(G)}\right\rceil.
\end{equation}
This bound together with \Cref{thm-ringel64,thm-germanPaper} give us quite close estimates for $\Ucr(K_n)$ and $\Ucr(K_{m,n})$, respectively.
In particular, for $n\geq 2$ we have
\begin{equation}
\label{eq-KnLowerBound}
\Ucr(K_n) \geq  \left\lceil \frac{\binom{n}{2}}{2n-2}\right\rceil.
\end{equation}
On the other hand, we recall the upper bound~\eqref{eq-outerthickness} on the uncrossed number of an arbitrary graph $G$ using the notion of outerthickness of $G$.

\section{Proof of \texorpdfstring{\Cref{thm:complete}}{Theorem 1}}

In this section, we prove \Cref{thm:complete} by providing the exact formula for the uncrossed number of complete graphs.
That is, we show
\[
\Ucr(K_n) = 
\begin{cases}
    \lceil \frac{n+1}{4} \rceil, & \text{for } n \notin \{4,7\} \\
    3, & \text{for } n=7\\
    1, & \text{for } n=4
\end{cases}
\]
for every positive integer $n$.

We start with the upper bound, which is easier to prove.
For $n \notin \{4,7\}$, the upper bound follows from~\eqref{eq-outerthickness} and~\eqref{eq-outerthicknessKn} as we have 
\[\Ucr(K_n) \leq \theta_o(K_n) = \left\lceil \frac{n+1}{4}\right\rceil.\]
For $n=4$, we obviously have $\Ucr(K_4)=1$ as $K_4$ is planar.
Finally, $\Ucr(K_7)=3$ was proved by Hlin\v{e}n\'{y} and Masa\v{r}\'{i}k~\cite[Proposition 3.1]{masHlin23}.

It remains to prove the lower bound.
Since we already know that $\Ucr(K_7)=3$ and $\Ucr(K_4)=1$ and the statement is trivial for $n \leq 3$, it suffices to consider the case $n \geq 5$ with $n \neq 7$.
Let $\{D_1,\dots,D_k\}$ be an uncrossed collection of drawings of $K_n$ and let $D'_1,\dots,D'_k$ be corresponding uncrossed subdrawings of $K_n$ such that $D'_1\cup\dots\cup D'_k$ covers $E(K_n)$. 
By~\eqref{eq-KnLowerBound}, 
\[\Ucr(K_n) \geq \left\lceil \frac{\binom{n}{2}}{2n-2}\right\rceil.\]
By \Cref{thm-ringel64}, we get that if any uncrossed subdrawing $D'_i$ contains $2n-2$ edges, then $D'_i$ (as a wheel) contains a \emph{universal vertex}, that is, a vertex that is adjacent to all remaining vertices in $D'_i$.
If every drawing $D'_i$ contains at most $2n-3$ edges, then 
\begin{equation}
\label{eq-desiredComplete}
\Ucr(K_n) \geq \left\lceil \frac{\binom{n}{2}}{2n-3}\right\rceil = \left\lceil \frac{n}{4} + \frac{n}{4(2n-3)}\right\rceil = \left\lceil \frac{n+1}{4}\right\rceil
\end{equation}
and we are done.

Thus, suppose that some drawing $D'_i$ contains $2n-2$ edges.
Without loss of generality, we can assume $i=1$.
We then know that $D'_1$ contains a universal vertex $v$.
In every drawing $D'_j$ with $j > 1$, the edges incident to $v$ are already counted for $D'_1$, thus we can consider the drawings $D'_2,\dots,D'_k$ to be uncrossed drawings for $K_{n-1}$ obtained from $K_n$ by removing $v$.
Note that this form an uncrossed collection of drawings of $K_{n-1}$ together with the uncrossed edges of $D'_1$ not incident to $v$ and hence their uncrossed edges cover $E(K_{n-1})$.
Then, each $D'_j$, for $j\in[2,k]$ contributes at most $2n-4$ new uncrossed edges of $K_{n-1}$ as $h(K_{n-1}) = 2(n-1)-2=2n-4$ by \Cref{thm-ringel64}. 
So the number $k$ of drawings satisfies
\begin{equation}
  \label{eq-final}
\binom{n}{2} \leq 2n-2 + (k-1)(2n-4) = (2n-4)k + 2.
\end{equation}
However, 
$(2n-4)k+2\le (2n-3)k$ when $k\ge 2$, which is satisfied for $n \geq 5$ by \eqref{eq-final}.
As $\Ucr(K_n)$ is the smallest  integer $k$ that satisfies $\binom{n}{2} \leq k(2n-3)$, we again obtain Inequality~\eqref{eq-desiredComplete}.
\mbox{}\hfill$\qed$

\section{Proof of \texorpdfstring{\Cref{thm:completeBip}}{Theorem 2}}
\label{sec:completeBip}

In this section, we prove \Cref{thm:completeBip} by providing the exact formula for the uncrossed number of complete bipartite graphs.
That is, we show
\[
   \Ucr(K_{m,n})
= 
  \begin{cases}
    \lceil\frac{mn}{2m+n-2}\rceil, & \text{for } 3 \leq m \leq n \leq 2m-2 \\
    \lceil\frac{mn}{2m+n-1}\rceil, & \text{for } n=2m-1 \\
    \lceil\frac{mn}{2m+n}\rceil, & \text{for } 6 \leq 2m \leq n \\
    1, & \text{for } m \leq 2
  \end{cases}
\]
for all positive integers $m$ and $n$  with $m \leq  n$.
We start with the following useful lemma.

\begin{lemma}%
\label{lem-maxOuterplanar}
For positive integers $m$ and $n$ with $m \leq n$, every outerplanar graph $G$ that is a subgraph of $K_{m,n}$ has at most $2m+n-2$ edges.
\end{lemma}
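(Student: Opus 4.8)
The plan is to prove the sharper asymmetric bound $E(G) \le a + b + \min(a,b) - 2$, where $a$ and $b$ denote the numbers of vertices of $G$ lying in the two color classes of $K_{m,n}$, so that $a \le m$ and $b \le n$. This suffices: since $\min(a,b) \le a \le m$ and $a + b \le m + n$, we get $a + b + \min(a,b) - 2 \le 2m + n - 2$. I may also assume that $G$ is connected and that both color classes are nonempty, because if one part is empty then $G$ is edgeless, while if $G$ is disconnected I can repeatedly join two components by a single edge between opposite color classes; this keeps $G$ bipartite, outerplanar (all vertices remain on the outer face), and a subgraph of $K_{m,n}$, and it only increases the number of edges. (Note that the symmetric outerplanar bound $E \le (3(a+b)-4)/2$ is not enough here, so the count must exploit the difference between the two sides.)

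Fix an outerplanar embedding of $G$ and let $f$ be the number of bounded faces. Since $G$ is connected, Euler's formula gives $E(G) = (a+b) + f - 1$, so the target inequality is equivalent to $f \le \min(a,b) - 1$. The heart of the argument is therefore the claim that \emph{a connected bipartite outerplanar graph has at most $\min(a,b)-1$ bounded faces}.

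To prove this claim I would double count incidences between the bounded faces and the vertices of one fixed color class, say $B$ (the side of size $b$). Two structural facts drive the count. First, because $G$ is bipartite, each bounded face is bounded by an even cycle of length at least $4$ and hence is incident to at least two vertices of $B$; summing over the faces yields at least $2f$ incidences. Second, every vertex $v$ lies on the outer face, so among the $\deg(v)$ angular sectors around $v$ at least one belongs to the outer face, whence $v$ is incident to at most $\deg(v)-1$ bounded faces; summing over $B$ and using $\sum_{v \in B}\deg(v) = E(G)$ gives at most $E(G) - b$ incidences. Combining, $2f \le E(G) - b = a + f - 1$, so $f \le a - 1$. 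Repeating the count on the class $A$ gives $f \le b - 1$, and together they yield $f \le \min(a,b)-1$, as required.

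The main obstacle is making this face count rigorous when $G$ is not $2$-connected, where the tidy picture ``one outer sector per vertex'' can fail. The observations that keep everything clean are that in an outerplanar embedding every bridge is incident to the outer face on both of its sides, so no bridge lies on the boundary of a bounded face; consequently each bounded face is genuinely a simple even cycle, and each vertex meets each of its incident bounded faces through exactly one sector. This makes the lower estimate $2f \le \sum_{v\in B}(\#\text{bounded faces at }v)$ an exact double count and keeps the upper estimate $\#(\text{bounded faces at }v) \le \deg(v)-1$ valid (with slack only at cut vertices), which is all the argument requires.
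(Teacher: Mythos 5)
Your proof is correct, but it reaches the lemma by a genuinely different counting argument than the paper. The paper normalizes $G$ to be connected and spanning in $K_{m,n}$ and then combines Euler's formula with a count of \emph{face-boundary lengths}: every bounded face has length at least $4$ by bipartiteness, while the outer face has length at least $2n$ because all $n$ vertices of the larger class lie on its boundary walk and colors alternate along it; this gives $4(f-1)+2n\le 2e$ and hence $e\le 2m+n-2$. You instead prove a bound on the \emph{number of bounded faces}, $f_{\mathrm{bd}}\le\min(a,b)-1$, via vertex--face incidences: each bounded face meets at least two vertices of each class, and each vertex cedes at least one angular sector to the outer face. It is worth noting that after substituting Euler's formula the two arguments produce the same key inequality, $2(f-1)\le e-n$ (yours, applied to the larger class), so they are in effect dual counts of the same quantity; what your route buys is a formally sharper and self-contained statement $e\le a+b+\min(a,b)-2$ in terms of $G$'s own class sizes, an explicit treatment of the reduction to the connected case (joining components by an edge rather than assuming $G$ spanning), and explicit care about non-$2$-connected embeddings, which the paper glosses over. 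One small overstatement on your side: the fact that no bridge borders a bounded face only yields that bounded-face boundaries are closed walks without repeated edges, not that they are \emph{simple} cycles (ruling out figure-eight boundaries needs a further outer-face argument). Fortunately your count never needs simplicity: in a simple bipartite graph, a closed walk without repeated edges has even length at least $4$ and must visit at least two distinct vertices of each class, and your per-vertex bound $\deg(v)-1$ holds regardless of how many sectors a single bounded face occupies at $v$.
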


\begin{proof}
Let $G$ be an outerplanar graph that is a subgraph of $K_{m,n}$ and let $v$ and $e$ be the number of vertices and edges of $G$, respectively. 
Since we are interested in the maximum number of edges of $G$, we can assume without loss of generality that $G$ is connected and spanning in $K_{m,n}$.
Let $f$ be the number of faces in an outerplanar drawing $D$ of~$G$.
By Euler's formula, we have $v-e+f=2$.
We also have $4(f-1)+2n \leq 2e$, because if we traverse the boundaries of all faces of $D$, then each edge is visited exactly twice. Moreover, each face, except for the outer face, gives a contribution of at least 4 edges.
For the outer face, the contribution is at least $2n$ since $G$ as a subgraph of $K_{m,n}$ is bipartite, all $n$ vertices of the larger color class must be traversed, and the vertices of the outer face have to alternate between the color classes.

By subtracting Euler's formula $2$ times from half of the inequality, we obtain 
\[2(f-1)+n -2v+2e-2f \leq e - 4,\]
which gives
\[e \leq 2v-n-2.\]
Since, $v=m+n$, we obtain $e \leq 2m+n-2$.
\end{proof}

We note that the bound from \Cref{lem-maxOuterplanar} is tight for all values of $m$ and $n$.
The bound is attained for the ladder graph on $2m$ vertices with $3m-2$ edges where we suitably append $n-m$ leaves, obtaining an outerplanar subgraph of $K_{m,n}$ with $m+n$ vertices and $2m+n-2$ edges; see Figure~\ref{fig-ladder}.

\begin {figure}[ht]
  \centering
  \includegraphics{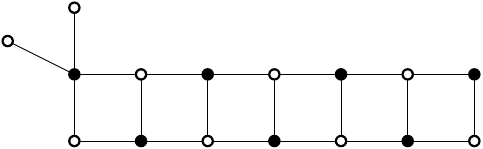}
  \caption {An example of a ladder graph with leaves that is an outerplanar subgraph of $K_{m,n}$ with $2m+n-2$ edges. Here, we have $m=7$ black vertices and $n=9$ white vertices.}
  \label{fig-ladder}
\end {figure}

To obtain tight formulas for $\Ucr(K_{m,n})$, we will need to also consider graphs that are not outerplanar.
A \emph{double cycle} is a graph that is obtained from a cycle (where we also allow cycle on 2 vertices) by replacing each edge $\{u,v\}$ with a copy of $C_4$ on vertices $\{u,v,x,y\}$ and edges $\{u,x\}$, $\{u,y\}$, $\{v,x\}$, and $\{v,y\}$ where $x$ and $y$ are new vertices; see \Cref{fig:basic-dc}.
We call the vertices from the original cycle ($u$ and $v$) \emph{black vertices} and the new vertices ($x$ and $y$) \emph{white vertices}.
A \emph{double cycle with leaves} is a double cycle where we attach new vertices of degree 1 to black vertices; see \Cref{fig:dc-with-leaves}.
These new vertices are also white.
Observe that a double cycle with leaves is a bipartite graph where the two color classes are formed by black and white vertices, respectively.

\begin {figure}[ht]
\centering
\subcaptionbox{\centering\label{fig:basic-dc}}[.49\textwidth]{\includegraphics[page=1]{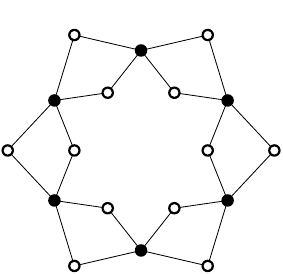}}
\subcaptionbox{\centering\label{fig:dc-with-leaves}}[.49\textwidth]{\includegraphics[page=2]{fig-doubleCycle.pdf}}
\caption {(a) An example of a double cycle. (b) An example of a double cycle with leaves.}
\label{fig-doubleCycle}
\end {figure}

We start with two technical lemmas which will be used to prove the lower bounds.
For a graph $G$ containing a cycle $C$, a path $P$ of length two in $G$ is called a \emph{double-chord of $C$} if both ends of $P$ lie on $C$, but the middle vertex of $P$ is not on $C$.

\begin{lemma}%
\label{lem:bwchords}
Let $G$ be a bipartite connected graph whose color classes are called black and white, $C$ be a cycle of length at least $6$ in $G$, and $P_1$ and $P_2$ be two paths of $G$ satisfying all of the following assumptions:
\begin{enumerate}[a)]\parskip0pt
  \item \label{chords:a} For $i=1,2$, $P_i$ is a chord of $C$ (that is, an edge with black and white ends on $C$) or a double-chord of $C$ with black ends.
  \item \label{chords:b} If both $P_1$ and $P_2$ are double-chords with ends at distance two on $C$, then they both end in the same pair of vertices on $C$.
  \item \label{chords:c} If both $P_1$ and $P_2$ are disjoint double-chords with ends at distance four on $C$, then the length of $C$ is not equal to $8$.
  \item \label{chords:d} For $i=1,2$, denoting by $x$ and $y$ the ends of $P_i$ on $C$, both subpaths of $C$ with the ends $x$ and $y$ contain an internal vertex not incident to $P_{3-i}$.
\end{enumerate}
Assume that $D$ is a planar drawing of $G$, and that if both $P_1$ and $P_2$ are chords, then they are drawn in different regions of the subdrawing of $C$ in $D$.

Then there is a pair of vertices of $G$ of opposite colors which do not share a face of~$D$.
\end{lemma}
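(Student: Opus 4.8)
I reduce to the subdrawing $S$ consisting of $C$, $P_1$ and $P_2$ inside $D$. Since $S$ is a subdrawing of $D$, every face of $D$ lies inside a single face of $S$, so two vertices not incident to a common face of $S$ are not incident to a common face of $D$ either; hence it suffices to produce a black vertex and a white vertex of $G$ that do not share a face of $S$. The cycle $C$ is drawn as a simple closed curve bounding two open disks, the inside and the outside, and each of $P_1,P_2$, attached to $C$ only at its end(s), lies entirely in one of them. A chord or double-chord drawn in a disk cuts it into exactly two faces; for a double-chord the middle (white) vertex lies on the cutting arc, hence is incident to both of these faces and to no face of the other disk. Writing $a_i,b_i$ for the ends of $P_i$ on $C$, the (at most four) points $a_1,b_1,a_2,b_2$ split $C$ into four arcs, and for every vertex of $S$ I record the set of faces of $S$ incident to it.

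First I treat the case where $P_1$ and $P_2$ lie in different disks (forced when both are chords, by hypothesis). Labelling the two inside faces by a sign according to the side of $P_1$, and the two outside faces by the side of $P_2$, each vertex of $C$ receives an inside sign and an outside sign, and two such vertices fail to share a face of $S$ exactly when they \emph{differ in both signs}. Walking around $C$, the inside sign flips at $a_1,b_1$ and the outside sign at $a_2,b_2$; tracking this shows that the first and third of the four arcs (in cyclic order) always form such an opposite pair, and in the interleaving subcase the remaining two arcs do as well. It remains to find an opposite arc pair carrying internal vertices of opposite colours. Here condition~(a) fixes the colours of the attachment points (a chord has ends of opposite colour, a double-chord has two black ends), which forces the parities of the arc lengths; condition~(d) guarantees that the relevant arcs carry an internal vertex that is not an attachment point of the other path; and a short parity computation then exhibits an internal black vertex in one arc and an internal white vertex in the opposite one. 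The only configurations in which every available arc pair collapses to equal-coloured single internal vertices are two double-chords ending in distinct pairs at distance two, and two disjoint double-chords at distance four on a cycle of length exactly eight; these are precisely the patterns removed by conditions~(b) and~(c).

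Next I treat the case where $P_1$ and $P_2$ lie in the same disk. Then their attachment points do not interleave (by planarity), and at least one of them is a double-chord, since two chords in one disk are disallowed. Now the empty disk is a single face of $S$ incident to \emph{every} vertex of $C$, so any two vertices of $C$ share that face and a separating pair must involve the middle vertex $w$ of a double-chord. Such a $w$ is white and incident only to the two faces bordering its own path, so I search for a black vertex of $C$ lying in neither of them, that is, a black internal vertex of the region cut off by the other path; condition~(d) again supplies a suitable internal vertex, and the residual degenerate configurations are once more removed by conditions~(b) and~(d).

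The main obstacle is the colour bookkeeping in the different-region case: one must check, across the three type combinations (chord/chord, chord/double-chord, double-chord/double-chord) and the interleaving and non-interleaving subcases, that conditions~(a) and~(d) always leave at least one opposite arc pair with internal vertices of both needed colours, and that the finitely many surviving degeneracies coincide exactly with the excluded patterns of~(b) and~(c) (the chord/double-chord degeneracy being already blocked by~(d)). Extra care is needed when some of $a_1,b_1,a_2,b_2$ coincide, where an arc shrinks to a point and the corresponding parity count must be redone. Once these checks are complete, the separated opposite-coloured pair found in $S$ is the desired pair in $D$.
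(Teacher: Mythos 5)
Your geometric framework is sound and matches what the paper actually does: you pass to the subdrawing $S=C\cup P_1\cup P_2$, note that faces of $D$ refine faces of $S$, and classify the separated pairs of $S$ --- opposite arcs in the different-disk case, middle vertex of a double-chord versus the arc subtended by the other path in the same-disk case. Your sign bookkeeping is correct, as are the observations that interleaving attachments force different disks and that two chords in one disk are excluded by hypothesis. The problem is that your proof stops exactly where the content of the lemma begins. The sentences ``a short parity computation then exhibits an internal black vertex in one arc and an internal white vertex in the opposite one'' and ``the only configurations in which every available arc pair collapses to equal-coloured single internal vertices are \dots precisely the patterns removed by conditions (b) and (c)'' are not arguments; they are restatements of the claim to be proved, and you flag this yourself (``one must check, across the three type combinations \dots Once these checks are complete''). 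That deferred verification \emph{is} the proof: one must pin down arc parities from assumption (a); use assumption (d) to force the relevant arcs to be long enough (e.g.\ for a chord, (d) plus bipartiteness forces its subtended arc to have odd length at least $3$, hence a black internal vertex); treat the coincidence of attachment points allowed by (b), where both double-chords share their end pair and the witness pair changes entirely (there it is a middle vertex against a black internal vertex of the long arc, not an arc--arc pair); and verify that on an $8$-cycle the $(2,2,2,2)$ interleaved double-chord configuration is the unique surviving counterexample. These subcases need different witnesses --- sometimes an arc-internal vertex, sometimes a middle vertex of a double-chord --- so the check does not follow mechanically from your setup, and a wrong choice of witness fails (for instance, a middle vertex of an inside double-chord shares a face with \emph{every} vertex of $C$ whenever the other path is outside, so it is useless there).

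In short: correct skeleton, genuine gap. Carrying out the checks you postponed would essentially reproduce the paper's proof, which performs the same analysis but organizes it by non-interleaving versus interleaving attachment order rather than by same versus different disks.
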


\begin{proof}
Let the ends of $P_1$ and $P_2$ on $C$ be $x_1,x_2,x_3,x_4$ in this clockwise cyclic order on $C$ (regardless of whether they belong to $P_1$ or $P_2$), and note that we do not require these four vertices to be pairwise distinct.
Let $Q_i$ denote the subpath of $C$ from $x_i$ to $x_{i+1}$ (where $x_5=x_1$) in the clockwise direction.
We consider first the case that one of the paths joins two consecutive vertices of $x_1,x_2,x_3,x_4$ in their cyclic order on $C$, say, $P_1$ with the ends $x_1$ and $x_2$ (same as $Q_1$). 
Then $P_2$ symmetrically has ends $x_3$ and $x_4$ (same as $Q_3$), and it may possibly happen that $x_2=x_3$ and/or $x_4=x_1$.

If both $P_1$ and $P_2$ are chords, then, from our specific assumption on the drawing $D$, the internal vertices of $Q_1$ are separated from the internal vertices of $Q_3$ (by the cycle of $C\cup P_1\cup P_2$ passing through $P_1$ and $P_2$), and since they include both black and white vertices, we get a pair of opposite colors with no common face in $D$.
So, up to symmetry, $P_1$ is a double-chord.

First, we handle the case that $P_{2}$ is a chord.
Observe that there has to exist a white vertex on both $P_1$ and $Q_1$.
One of those two vertices, denoted as $y$, is separated from $Q_3$ by drawing of $P_2\cup P_1\cup Q_1$.
However, as $P_2$ is a chord, $Q_3$ contains at least 2 internal vertices and hence, at least one of them is black.

Second, we assume $P_2$ is a double-chord.
By symmetry again, we may as well assume that $Q_1$ is not longer than $Q_3$ if $P_2$ is also a double-chord.
Then, using assumption~\eqref{chords:b} if $P_2$ is a double-chord, we have that $Q_3$ is of length at least $3$.
Now, one of $P_1$ or $Q_1$ has an internal vertex $y$ separated in $D$ from the internal vertices of $Q_3$, and we can thus choose an internal vertex of $Q_3$ of color opposite to that of $y$ and not sharing a face with $y$.

Thus, it remains to consider that, up to symmetry, $P_1$ has ends $x_1$ and $x_3$ and $P_2$ has ends $x_2$ and $x_4$, such that all $x_1,x_2,x_3,x_4$ are distinct.
In the drawing $D$, clearly, no internal vertex of $Q_1$ shares a common face with an internal vertex of $Q_3$, and likewise for $Q_2$ and $Q_4$. 
It now suffices to find such a vertex pair with opposite colors.

If $C$ is of length $6$, then the only case fulfilling the assumptions is that of $P_1$ and $P_2$ both being chords, and then the remaining two vertices on $C$ indeed are of opposite colors and form the desired pair.
If $C$ is of length at least $8$, then, respecting assumption~\eqref{chords:c}, one of the paths, say $Q_1$, is of length at least $3$ and thus contains internal vertices of both colors.
If $Q_3$ is of length at least $2$, then we again get a desired pair of opposite colors. So, let $Q_3$ be a single edge of $C$.
If any one of $Q_2$ or $Q_4$ is also a single edge, we contradict assumption~\eqref{chords:d}.
Then both $Q_2$ and $Q_4$ are of length at least $2$, and the neighbor of $x_3$ on $Q_2$ is of the opposite color to the neighbor of $x_4$ on $Q_4$, and we are again finished.
\end{proof}

We now prove the following lemma, which describes the structure of an uncrossed drawing of $K_{m,n}$ with $3 \leq m < n$ if it contains many edges.
In fact, the statement of this result is stronger than what we need for the proof of \Cref{thm:completeBip}.

\begin{lemma}%
\label{lem-compBipStructure}
For integers $m$ and $n$ with $3 \leq m < n$ and for $a \in \{0,1\}$, let $D'$ be an uncrossed subdrawing of $K_{m,n}$ with at least $2m+n-a$ edges.
Let the color classes of $K_{m,n}$ be called black and white such that we have $m$ black vertices.

Then either $D'$ contains a vertex that is adjacent to all black vertices, or $n \geq 2m-a$ and $D'$ represents a subgraph of a double cycle with leaves.
\end{lemma}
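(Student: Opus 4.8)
The plan is to read the structure off directly from the planarity and face-incidence constraints supplied by \Cref{lem-basicProperties}. Since $D'$ is an uncrossed subdrawing of $K_{m,n}$ and every black--white pair is an edge of $K_{m,n}$, \Cref{lem-basicProperties} gives that $D'$ is a planar drawing in which \emph{every} black vertex and \emph{every} white vertex lie on a common face; call this property $(\star)$. Using the second part of \Cref{lem-basicProperties} I would pass to a connected supergraph, so that without loss of generality the graph $H$ represented by $D'$ is connected (the target structures are closed under taking subgraphs, so a conclusion for the supergraph transfers back, with the white-apex case handled separately). Finally, since $H$ has at least $2m+n-a>2m+n-2$ edges, \Cref{lem-maxOuterplanar} shows that $H$ is not outerplanar, so not all vertices of $D'$ lie on one face and $D'$ contains cycles enclosing other vertices.

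The engine of the proof is \Cref{lem:bwchords}, used contrapositively: its conclusion is an opposite-colored pair sharing no face, which $(\star)$ forbids. Hence $H$ may contain no cycle $C$ of length at least $6$ together with two chords or double-chords $P_1,P_2$ satisfying assumptions \eqref{chords:a}--\eqref{chords:d} (with the stated ``different regions'' condition when both are chords). Assuming $D'$ has no vertex adjacent to all black vertices (no white apex), I would show that forbidding every such configuration forces two structural facts: (a) every white vertex has degree at most $2$; and (b) replacing each degree-$2$ white by an edge between its two black neighbours yields a multigraph $M$ on the black vertices with edge-multiplicity at most $2$ whose underlying simple graph has maximum degree at most $2$, i.e.\ a disjoint union of paths and cycles. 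Facts (a) and (b) say precisely that $H$, after deleting its whites of degree at most $1$, is a sub-double-cycle and that the remaining degree-$1$ whites attach as leaves; thus $H$ is a subgraph of a double cycle with leaves.

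Granting this structure, the inequality $n\ge 2m-a$ is a short count. Writing $d$ for the number of degree-$2$ whites and $\ell$ for the number of degree-$1$ whites, every edge meets exactly one white, so $H$ has exactly $2d+\ell$ edges, while $d+\ell\le n$ as these whites are distinct. From $2d+\ell\ge 2m+n-a$ and $\ell\le n-d$ we get $d\ge 2m-a$, and $d\le n$ then yields $n\ge 2m-a$, as claimed. (The structural bound $d\le 2m$, from $M$ being a subgraph of a doubled cycle on at most $m$ blacks, is consistent and pins $d$ near $2m$, matching \Cref{thm-germanPaper}.)

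The main obstacle is executing steps (a) and (b), i.e.\ converting each possible failure of the double-cycle structure into a concrete witness $(C,P_1,P_2)$ for \Cref{lem:bwchords}. The difficulty is that the offending patterns --- a white of degree at least $3$, a black pair joined by three whites, or two ``interleaving'' double-chords --- are \emph{not} locally contradictory (for instance $K_{2,3}$ on its own satisfies $(\star)$), so one must exploit connectivity and the large edge count to first locate a cycle $C$ of length at least $6$ and then choose the two chords or double-chords so that all of \eqref{chords:a}--\eqref{chords:d} and the ``different regions'' requirement hold simultaneously. I expect this bookkeeping --- a clean enumeration of the ways the structure can fail, together with careful handling of short cycles and of the connected-supergraph reduction in the apex case --- to be where essentially all of the work lies.
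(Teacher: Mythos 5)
Your proposal assembles the right ingredients --- property $(\star)$ from \Cref{lem-basicProperties}, non-outerplanarity from \Cref{lem-maxOuterplanar}, and the contrapositive use of \Cref{lem:bwchords} --- and your final count ($2d+\ell\ge 2m+n-a$, $d+\ell\le n$, hence $d\ge 2m-a$ and $n\ge 2m-a$) is correct once every white vertex is known to have degree at most $2$. This is indeed the same strategy as the paper's. However, there is a genuine gap: the entire combinatorial core of the argument, your steps (a) and (b), is deferred as ``bookkeeping,'' and that is precisely where essentially all of the paper's proof lives. The paper has to: extract a $K_{2,3}$ minor and two cycles $C,C'$ sharing a path; show all vertices off $C\cup C'$ are white; show all \emph{black} vertices lie on a \emph{single} cycle $C''$, chosen to minimize the number of chords; and then exclude chords, far-apart double-chords, and whites of degree $\ge 3$ by repeated, carefully checked applications of \Cref{lem:bwchords} --- including genuinely exceptional configurations (a length-$8$ cycle with two crossing double-chords, which forces $m=4$, $n=6$ and needs the edge count again; and a chord whose endpoint is adjacent to both ends of a double-chord, which is handled not by \Cref{lem:bwchords} at all but by rerouting $C''$ through $r$ to contradict the minimality of the chord count). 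These cases show the hypotheses \eqref{chords:a}--\eqref{chords:d} of \Cref{lem:bwchords} cannot always be satisfied, so the reduction of ``failure patterns'' to witnesses $(C,P_1,P_2)$ is not routine and cannot be assumed to go through.

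Two further points would need repair even if (a) and (b) were established. First, (b) as stated is too weak: a multigraph on the blacks whose underlying simple graph is a ``disjoint union of paths and cycles'' is \emph{not} in general a subgraph of a doubled cycle --- a cycle component not containing all black vertices of $D'$ (for example, a triangle through three whites plus a separate black component) satisfies (b) but is not a subgraph of any double cycle with leaves, since in such a graph every cycle of the black multigraph must be the full black cycle. This is exactly why the paper organizes its proof around proving that \emph{all} black vertices lie on one cycle $C''$. Second, your reduction to a connected supergraph via \Cref{lem-basicProperties} does not transfer back cleanly: if the supergraph contains a vertex adjacent to all black vertices but $D'$ itself does not contain all of those edges, the lemma's disjunction for $D'$ is not recovered. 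This reduction is also unnecessary, since $K_{m,n}$ itself is connected and so $(\star)$ holds for $D'$ directly, which is how the paper proceeds without any connectivity assumption on the represented subgraph.
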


\begin{proof}
We assume that no white vertex is adjacent to all black vertices as otherwise we are done.
The uncrossed subdrawing $D'$ is planar by definition and, by \Cref{lem-basicProperties},
for every black vertex $b$ and every white vertex $w$, there is a face of $D'$ which is incident with vertices $b$ and~$w$.

Since $D'$ is a drawing of a graph $G'$ that is a subgraph of $K_{m,n}$ and has $2m+n-1> 2m+n-2$ edges, it follows from \Cref{lem-maxOuterplanar} that this graph is not outerplanar.
Thus, $G'$ contains a $K_{2,3}$ or $K_4$ as a minor. However, if $K_4$ is a minor of a bipartite graph $G'$, then $G'$ also contains a minor resulting from $K_4$ by subdividing at least one edge, which in turn contains a $K_{2,3}$ minor, too.
Since $K_{2,3}$ contains two 4-cycles that share a path on three vertices, it follows that there are cycles $C$ and $C'$ in $G'$ that share exactly one path $P$ that contains at least three vertices and we also have $|V(C) \setminus V(C')| \geq 1$ and $|V(C') \setminus V(C)| \geq 1$.

Let $Q$ be the set of vertices of $G'$ that are not in $V(C) \cup V(C')$.
Now, we show that all vertices from $Q$ are white.
Since $m < n$, there are more white vertices than black ones and thus it suffices to show that there are no vertices of different colors in~$Q$.
Suppose for contradiction that there are black and white vertices in $Q$.
The cycles $C$ and $C'$ determine in $D'$ three pairwise disjoint open regions $\varrho_1$, $\varrho_2$, and $\varrho_3$ in $\mathbb{R}^2$.
If any region $\varrho_i$ contains a black vertex, then, since $m<n$ and since any two vertices of different colors share a face of $D'$, $\varrho_i$ has to contain all white vertices from $Q$.
In particular, $\varrho_i$ contains at least one black and at least one white vertex.
This, however, is impossible as then there is a vertex contained in one of the sets $P$, $V(C) \setminus V(C')$, or $V(C') \setminus V(C)$ that does not share a face of $D'$ with any of these two vertices in $\varrho_i$ (here we used that $P$ has at least 3 vertices and that $|V(C')\setminus V(C)| \geq 1$ and $|V(C)\setminus V(C')| \geq 1$).
It follows that each region $\varrho_i$ contains only white vertices and thus all vertices in $Q$ are white.

Now, we show that all black vertices lie on a single cycle in $G'$.
Since all vertices from $Q$ are white, all black vertices lie in $V(C) \cup V(C')$.
Let $P_1$, $P_2$, and $P_3$ be the three paths that we obtain from $C$ and $C'$ by removing their two common vertices $u$ and $v$ of degree 3.
If each $P_i$ contains a black vertex, then $Q$ is empty as every white vertex in each of the regions $\varrho_1$, $\varrho_2$, and $\varrho_3$ does not share a face of $D'$ with some of these black vertices.
Since $m<n$, each path $P_i$ contains more white vertices than black, and then $u$ and $v$ are black.
Therefore, each path $P_i$ has at least three vertices.
Indeed, $P_1$ having only one white vertex already proves the claim as it does not contain any black verteces.
Since $Q$ is empty and $G'$ has only at most $m+n+1< 2m+n-1$ edges as $m \geq 3$, there is an edge of $G'$ that is not an edge of $C$ nor $C'$.
Then, however, there is a white and black vertex that do not share a face in $D'$ as can be shown by routine case analysis. 
This contradicts the assumptions and thus some $P_i$ does not contain a black vertex.
Therefore, all black vertices lie on some cycle $C''$ of $G'$.
We assume that $C''$ is chosen such that the number of edges of $G'$ which are chords of $C''$ (have both ends on $C''$) is minimized.

Let $R$ be the set of vertices of $G'$ that are not contained in $C''$.
All vertices from $R$ are white and all their neighbors are black and lie on $C''$. 
We will be finished with the main part if we show that every vertex of $R$ has at most one neighbor on $C''$, or exactly two neighbors on $C''$ which are at distance $2$ along $C''$, and that there are no chords of $C''$ in $G'$.
Indeed, then $G'$ would be a subgraph of a double cycle with leaves, or there would be two vertices $r_1,r_2\in R$ having the same pair of neighbors on $C''$ which is a contradiction via \Cref{lem:bwchords}.

Assume that some vertex $r\in R$ has two neighbors $x$ and $y$ at a distance at least $4$ along~$C''$, and recall that $r$ is not adjacent to all black vertices.
If there were no chords of $C''$ in $G'$ and every vertex of $R\setminus\{r\}$ had less than $2$ neighbors on $C''$, then $G'$ would have at most $2m+n-2$ edges ($2m$ on $C''$ plus at most $m-1$ incident to $r$ plus $n-m-1$ incident to the remaining vertices of $R$), a contradiction.
Therefore, there exists a chord of $C''$ or a vertex $r'\in R\setminus\{r\}$ with two neighbors on $C''$ to which, together with the double-chord of $r$, we may apply \Cref{lem:bwchords} to derive a contradiction to the assumptions; except in one specific case of $C''$ of length $8$ which has two ``crossing'' double-chords in $G'$.
In the latter exceptional case, we have $m=4$ and $n=6$, and since we assume at least $2m+n-1=13$ edges, one of $r$ or $r'$ has another neighbor on $C''$, which by a simple analysis again leads to a pair of vertices of opposite colors not sharing any face in $D'$.

Hence, no vertex $r\in R$ has two neighbors at a distance at least $4$ along~$C''$.
This also implies that no vertex $r\in R$ has three or more neighbors in $C''$, since either $r$ would be adjacent to all three black vertices of $C''$, or $r$ would have two neighbors at a distance at least $4$ along $C''$.
We are left with showing that there are no chords of $C''$ in $D'$.
First, if all chords were drawn in $D'$ in the same region of $C''$ and no vertex of $R$ had two neighbors on $C''$, then $G'$ would again have at most $2m+n-2$ edges since $G'$ is bipartite (by \Cref{lem-maxOuterplanar}), a contradiction to the assumptions.
Otherwise, we have two chords $P_1=\{f\}$ and $P_2=\{f'\}$ drawn in different regions of $C''$ in $D'$, or a chord $P_1=\{f\}$ of $C''$ and a double-chord $P_2$ formed by $r\in R$ with two neighbors on $C''$.
We again get a contradiction to the assumptions by applying \Cref{lem:bwchords} to $P_1$ and $P_2$, except in one case of $r$ having both its neighbors adjacent to one end of $f$ on $C''$. 
In the latter case, we ``reroute'' the cycle $C''$ through $r$, and so decrease the number of chords, which contradicts our choice of $C''$.

Since all vertices in $R$ have degree at most 2, the sum of their degrees is at most the number of edges that are not in $C''$, that is, $2m+n-a-2m=n-a$.
Thus, $2|R| \geq n-a$.
Since $|R| = (n+m) - 2m = n-m$, we obtain $2(n-m) \geq n-a$, which can be rewritten as $n \geq  2m-a$.
This finishes the proof.
\end{proof}

Subsequently, we turn our attention to the upper bounds for the proof of \Cref{thm:completeBip}.

\begin{lemma}
\label{lem-2m}
For every $n \geq 2m$ and $m \geq 3$ the edges of the complete bipartite graph $K_{m,n}$ can be covered by $\left\lceil \frac{mn}{2m+n} \right\rceil$ double cycles with leaves (not necessarily edge-disjoint).
\end{lemma}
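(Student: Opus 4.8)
The plan is to reduce the statement to an explicit combinatorial covering design. First I would record the edge count of the building blocks: a double cycle with leaves that spans all of $K_{m,n}$ and places all $m$ black vertices on its base cycle has exactly $2m+n$ edges, since the doubled $m$-cycle on the black vertices contributes $4m$ edges using $2m$ white double-vertices (two per gap between cyclically consecutive black vertices), while the remaining $n-2m$ white vertices become leaves contributing one edge each; this matches $h(K_{m,n})=2m+n$ from \Cref{thm-germanPaper}. Consequently $k=\lceil mn/(2m+n)\rceil$ copies have total edge budget $k(2m+n)\ge mn=|E(K_{m,n})|$, so counting alone permits a cover, and the entire content of the lemma is to exhibit one whose overlap is small enough.

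The key reformulation I would use is white-centric. Fix once and for all a cyclic order $b_0,\dots,b_{m-1}$ of the black vertices with gaps $g_i=\{b_i,b_{i+1}\}$ (indices mod $m$). In a single double cycle each white vertex is either a \emph{double-vertex} occupying some gap $g_i$ (covering the two edges to $b_i$ and $b_{i+1}$) or a \emph{leaf} at some $b_i$ (covering the single edge to $b_i$); the per-round constraint is that each gap is occupied by exactly two double-vertices, so exactly $2m$ of the $n$ whites are double-vertices and the remaining $n-2m$ are leaves. Covering all of $E(K_{m,n})$ then amounts to assigning each white vertex, across the $k$ rounds, a schedule of gaps and single blacks whose union is all of $\{b_0,\dots,b_{m-1}\}$, subject to the per-round capacity of two double-vertices per gap.

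To produce such a schedule I would use a rotational construction. Since a set of $\lfloor m/2\rfloor$ pairwise-disjoint gaps forms a (near-)perfect matching of the black cycle, each white vertex can cover all $m$ blacks by playing double-vertex in $\lfloor m/2\rfloor$ suitably chosen gaps, plus at most one leaf when $m$ is odd. I would distribute these roles over the $k$ rounds by cyclic shifts, arranging that the gap-by-round incidence becomes the all-twos pattern demanded by the capacity, while the surplus $n-2m$ leaf-slots per round absorb the extra white vertices when $n>2m$. A short calculation then confirms the global counts $\sum_j x_j=2mk$ and $\sum_j y_j=(n-2m)k$ for the numbers $x_j,y_j$ of double-vertex and leaf appearances of white vertex $j$, consistent with $\sum_j(2x_j+y_j)=(2m+n)k\ge mn$.

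The main obstacle is the divisibility and parity bookkeeping rather than the idea itself: $k=\lceil mn/(2m+n)\rceil$ need not divide these quantities evenly, odd $m$ leaves $C_m$ without a perfect matching (forcing a leftover gap or leaf that must be patched), and the boundary $n=2m$, where the cover is forced to be an exact edge-decomposition with no leaves at all, behaves differently from $n>2m$. I expect most of the effort to go into making the cyclic-shift assignment respect the exactly-two-per-gap capacity and full per-white coverage \emph{simultaneously}, using a handful of designated rounds or white vertices to clean up the remainders.
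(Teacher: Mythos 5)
Your white-centric reformulation and the edge-count bookkeeping are correct, but the core of your construction contains a quantitative error that kills the approach on most of the range of the lemma. You propose that \emph{every} white vertex covers the $m$ black vertices by serving as a double-vertex in $\lfloor m/2\rfloor$ pairwise-disjoint gaps (plus possibly one leaf). Summed over all whites, this demands $n\lfloor m/2\rfloor$ double-vertex slots, while the total supply over $\ell = \lceil mn/(2m+n)\rceil$ rounds is only $2m\ell$ (two slots per gap, $m$ gaps, $\ell$ rounds). For even $m$ the inequality $n\cdot m/2 \leq 2m\ell$ forces $\ell \geq n/4$, and since $mn/(2m+n) \geq n/4$ holds iff $n \leq 2m$, your allocation is feasible essentially only at the boundary $n=2m$ (the ceiling rescues a few nearby cases). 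Concretely, for $m=3$, $n=30$ you have $\ell = \lceil 90/36\rceil = 3$, so only $2m\ell = 18$ double-vertex slots exist, yet your scheme needs $n\lfloor m/2\rfloor = 30$ of them. The correct per-white budget follows from your own counting: a white appearing $x$ times as a double-vertex and $y$ times as a leaf, with $x+y\leq \ell$, covers all blacks only if $2x+y\geq m$, hence $x \geq m-\ell$; the tight allocation is $x = m-\ell$ gap appearances per white (which shrinks as $n$ grows, with leaves doing most of the covering for $n \gg 2m$), not the $n$-independent $\lfloor m/2\rfloor$. So the difficulty is not ``divisibility and parity bookkeeping'' but choosing an allocation that respects capacity at all, and then actually scheduling it.

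For comparison, the paper's proof is black-centric and avoids this issue entirely: in each round, black vertex $b_i$ is joined to a \emph{contiguous} block of $d_i$ circularly consecutive whites, consecutive blocks overlapping in exactly two whites (those overlaps are the double-vertices; the $d_i - 4$ interior whites of a block are leaves), where $\sum_i d_i = 2m+n$. Successive rounds shift the starting index and cyclically rotate the degree sequence, so each black vertex sweeps out a growing contiguous interval of whites. The whole construction then reduces to exhibiting a degree sequence, namely the balanced (Beatty-type) choice $d_i = \lfloor i\cdot a\rfloor - \sum_{k<i} d_k$ with $a = (2m+n)/m$, for which every window of $\ell$ circularly consecutive degrees sums to at least $n$ --- exactly the property guaranteeing that after $\ell$ rounds every black vertex has met every white vertex. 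Some analogue of this balanced-sequence scheduling (with per-white gap usage $m-\ell$ rather than $\lfloor m/2\rfloor$) is the missing ingredient your proposal would need.
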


\begin{proof}
Let the vertices of the partition class of size $n$ be $w_{1}, w_{2}, \ldots, w_{n}$ and let the other $m$ vertices be $b_{1}, b_{2}, \ldots, b_{m}$. Because $n \geq 2m$ we can build double cycles with $n-2m$ leaves (\Cref{fig-doubleCycle}) in which all $b_{i}$ are the black vertices and all $w_{j}$ are the white vertices. For this, we consider the vertices $b_{i}$ and $w_{j}$, respectively, circularly ordered by their indices, as illustrated in \Cref{fig-doubleCycleCover}. 
Each double cycle with leaves in our cover will be defined by a sequence of (black) degrees $(d_{1}, d_{2}, \ldots, d_{m})$ with $d_{i} \geq 4$ and $\sum d_{i} = 2m+n$, and a starting index $1 \leq s \leq n$ in the following way.

The black vertices $b_{i}$ occur in the given cyclic order on a double cycle, and for all $1 \leq i \leq m$ there are exactly $d_{i}-4$ white leaves to the vertex $b_{i}$. 
The white vertices $w_{j}$ are assigned as neighbors to the black vertices on that double cycle with leaves such that vertex $b_{i}$ is adjacent to vertices $w_{j}$ with $s_{i} \leq j < s_{i}+d_{i}$ where $s_{i} = s + \sum_{k<i} (d_{k} - 2)$ and $j$ is considered modulo $n$. In other words, we distribute the white neighbors according to their circular order such that vertex $b_{1}$ is adjacent to a consecutive block of $d_1$ white vertices starting at vertex $w_{s}$, and the blocks for the other black vertices are shifted such that each has a two-vertex overlap with the previous block.

Assume we have constructed one such double cycle with leaves given by the degree sequence $(d_{1}, d_{2}, \ldots, d_{m})$ and the starting index $s$. Then we construct all remaining of the sought $\left\lceil \frac{mn}{2m+n} \right\rceil$ double cycles in the following way. We successively increase the starting index by the degree of the first vertex $s \rightarrow s+d_{1}$ and shift the degree sequence one to the left $(d_{1}, d_{2}, \ldots, d_{m}) \rightarrow (d_{2}, d_{3}, \ldots, d_{m}, d_{1})$. 
This ensures that in the next double cycle every black vertex $b_{i}$ is adjacent to a block of white vertices starting at the first white vertex in circular order that $b_{i}$ was not yet adjacent to. Hence, in the union of the constructed double cycles, every black vertex has a continuous interval of adjacent white neighbors. It remains to construct a starting sequence $(d_{1}, d_{2}, \ldots, d_{m})$ such that the sum of every $\left\lceil \frac{mn}{2m+n} \right\rceil$ circularly consecutive degrees is at least $n$, which we do in the following.

Let $\ell = \left\lceil \frac{mn}{2m+n} \right\rceil$, which is the number of constructed double cycles, and $a = \frac{2m+n}{m}$, which is the average degree in the degree sequence for the black vertices. Set the degrees to $d_{i} = \left\lfloor i \cdot a \right\rfloor - \sum_{k<i} d_{k}$, which are integers and we have $d_{i}\geq4$. Then the sum of the first $\ell$ degrees is
\begin{align}
\sum_{k \leq \ell} d_{k} = \left\lfloor \ell \cdot a \right\rfloor = \left\lfloor \left\lceil \frac{mn}{2m+n} \right\rceil \cdot a \right\rfloor \geq \left\lfloor \frac{mn}{2m+n} \cdot a \right\rfloor = \left\lfloor \frac{mn}{2m+n} \cdot \frac{2m+n}{m} \right\rfloor = n \label{eq:sumEll}\end{align} as intended.
So, if we extend the definition of the degrees for indices greater than $m$ we prove
$d_{i+m} = \lfloor(i+m)a\rfloor - \sum_{k<i+m} d_k =  d_{i}$ as $ma= 2m+n$ is an integer and by the first equality in \cref{eq:sumEll}, we have $\sum_{k\leq m} =\lfloor ma \rfloor=ma$.
Therefore, we get that the sum of $\ell$ degrees starting at $d_{i+1}$ in the circular order of degrees is \[\sum_{i < k \leq \ell+i} d_{k} = \left\lfloor (\ell+i) \cdot a \right\rfloor - \left\lfloor i \cdot a \right\rfloor \geq \left\lfloor \left\lfloor \ell \cdot a \right\rfloor + \left\lfloor i \cdot a \right\rfloor \right\rfloor - \left\lfloor i \cdot a \right\rfloor = \left\lfloor \ell \cdot a \right\rfloor \geq n.\] Hence, all circular intervals of length $\ell$ sum up to at least $n$.
\end{proof}

\begin {figure}[t]
\centering
\hbox{\small
\begin{tikzpicture}[scale=0.8] 
\coordinate (uu) at (0,0);  
\tikzstyle{every node}=[draw, color=black, shape=circle, inner sep=1.3pt, fill=black]
\foreach \u in {0,40,...,680} { \node (B\u) at (\u+90:0.8) {}; }
\foreach \u in {0,15,...,705} { \node[thick,fill=white] (W\u) at (\u:2.5) {}; }
\tikzstyle{every path}=[draw, color=black, semithick, bend right=48] 
\tikzmath{integer \globx; \globx = 0; integer \xx; integer \xI; integer \xJ;}
\foreach \d/\a in {45/0,60/40,60/80,45/120,60/160,60/200,45/240,60/280,60/320} {       
        \tikzmath{\xx = \globx; \xI = \globx+15; \xJ = \globx+\d; \globx = \xJ-15;}
        \xdef\globx{\globx}
        \foreach \u in {\xx,\xI,...,\xJ}  \draw (W\u) to (B\a); 
}
\tikzstyle{every node}=[draw, color=black, shape=circle, inner sep=2.5pt]
\draw node at (B0) {} node at (W0) {};
\end{tikzpicture}
\quad
\begin{tikzpicture}[scale=0.8] 
\coordinate (uu) at (0,0);
\tikzstyle{every node}=[draw, color=black, shape=circle, inner sep=1.3pt, fill=black]
\foreach \u in {0,40,...,680} { \node (B\u) at (\u+90:1) {}; }
\foreach \u in {0,15,...,705} { \node[thick,fill=white] (W\u) at (\u:2.5) {}; }
\tikzstyle{every path}=[draw, color=black, semithick, bend left=0]
\tikzmath{integer \globx; \globx = 60; integer \xx; integer \xI; integer \xJ;}
\foreach \d/\a in {60/0,60/40,45/80,60/120,60/160,45/200,60/240,60/280,45/320} {       
        \tikzmath{\xx = \globx; \xI = \globx+15; \xJ = \globx+\d; \globx = \xJ-15;}
        \xdef\globx{\globx}
        \foreach \u in {\xx,\xI,...,\xJ}  \draw (W\u) to (B\a); 
}
\tikzstyle{every node}=[draw, color=black, shape=circle, inner sep=2.5pt]
\draw node[label=below:$b_1$] at (B0) {} node[label=right:$w_1$] at (W0) {};
\end{tikzpicture}
\quad
\begin{tikzpicture}[scale=0.8] 
\coordinate (uu) at (0,0);
\tikzstyle{every node}=[draw, color=black, shape=circle, inner sep=1.3pt, fill=black]
\foreach \u in {0,40,...,680} { \node (B\u) at (\u+90:0.8) {}; }
\foreach \u in {0,15,...,705} { \node[thick,fill=white] (W\u) at (\u:2.5) {}; }
\tikzstyle{every path}=[draw, color=black, semithick, bend left=48]
\tikzmath{integer \globx; \globx = 135; integer \xx; integer \xI; integer \xJ;}
\foreach \d/\a in {60/0,45/40,60/80,60/120,45/160,60/200,60/240,45/280,60/320} {
        \tikzmath{\xx = \globx; \xI = \globx+15; \xJ = \globx+\d; \globx = \xJ-15;}
        \xdef\globx{\globx}
        \foreach \u in {\xx,\xI,...,\xJ}  \draw (W\u) to (B\a); 
}
\tikzstyle{every node}=[draw, color=black, shape=circle, inner sep=2.5pt]
\draw node at (B0) {} node at (W0) {};
\end{tikzpicture}
}\bigskip

\hbox{\small
\begin{tikzpicture}[scale=0.8] 
\coordinate (uu) at (0,0);
\tikzstyle{every node}=[draw, color=black, shape=circle, inner sep=1.3pt, fill=black]
\foreach \u in {0,40,...,680} { \node (B\u) at (\u+90:1.4) {}; }
\foreach \u in {0,15,...,705} { \node[thick,fill=white] (W\u) at (\u:2.5) {}; }
\tikzstyle{every path}=[draw, color=black, semithick, bend left=6]
\tikzmath{integer \globx; \globx = 210; integer \xx; integer \xI; integer \xJ;}
\foreach \d/\a in {45/0,60/40,60/80,45/120,60/160,60/200,45/240,60/280,60/320} {
        \tikzmath{\xx = \globx; \xI = \globx+15; \xJ = \globx+\d; \globx = \xJ-15;}
        \xdef\globx{\globx}
        \foreach \u in {\xx,\xI,...,\xJ}  \draw (W\u) to (B\a); 
}
\tikzstyle{every node}=[draw, color=black, shape=circle, inner sep=2.5pt]
\draw node at (B0) {} node at (W0) {};
\end{tikzpicture}
\quad
\begin{tikzpicture}[scale=0.8] 
\coordinate (uu) at (0,0);
\tikzstyle{every node}=[draw, color=black, shape=circle, inner sep=1.3pt, fill=black]
\foreach \u in {0,40,...,680} { \node (B\u) at (\u+90:1.6) {}; }
\foreach \u in {0,15,...,705} { \node[thick,fill=white] (W\u) at (\u:2.5) {}; }
\tikzstyle{every path}=[draw, color=black, semithick]
\tikzmath{integer \globx; \globx = 270; integer \xx; integer \xI; integer \xJ;}
\foreach \d/\a in {60/0,60/40,45/80,60/120,60/160,45/200,60/240,60/280,45/320} {
        \tikzmath{\xx = \globx; \xI = \globx+15; \xJ = \globx+\d; \globx = \xJ-15;}
        \xdef\globx{\globx}
        \foreach \u in {\xx,\xI,...,\xJ}  \draw (W\u) to (B\a); 
}
\tikzstyle{every node}=[draw, color=black, shape=circle, inner sep=2.5pt]
\draw node at (B0) {} node at (W0) {};
\end{tikzpicture}
\qquad
\begin{tikzpicture}[scale=0.8] 
\coordinate (uu) at (0,0);
\tikzstyle{every node}=[draw, color=black, shape=circle, inner sep=1.3pt, fill=black]
\foreach \u in {0,40,...,680} { \node (B\u) at (\u+90:0.8) {}; }
\foreach \u in {0,15,...,720} { \node[thick,fill=white] (W\u) at (\u:2.5) {}; }
\tikzstyle{every path}=[draw, color=black, semithick, bend right=48]
\tikzmath{integer \globx; \globx = 345; integer \xx; integer \xI; integer \xJ;}
\foreach \d/\a in {60/0,45/40,60/80,60/120,45/160,60/200,60/240,45/280,60/320} {
        \tikzmath{\xx = \globx; \xI = \globx+15; \xJ = \globx+\d; \globx = \xJ-15;}
        \xdef\globx{\globx}
        \foreach \u in {\xx,\xI,...,\xJ}  \draw (W\u) to (B\a); 
}
\tikzstyle{every node}=[draw, color=black, shape=circle, inner sep=2.5pt]
\draw node at (B0) {} node at (W0) {};
\end{tikzpicture}
}

\caption {An example of the cover of $K_{m,n}$ by $\ell$ double cycles with leaves sought in \Cref{lem-2m}. 
    Here we have $m=9$, $n=24$, $\ell=6$, and the chosen starting degree sequence of the black vertices is $(4,5,5,4,5,5,4,5,5)$.
    The indices of the black and white vertices grow counter-clockwise (with $b_1$ and $w_1$ being emphasized).
    Note that some edges of $K_{m,n}$ are covered twice.}
\label{fig-doubleCycleCover}
\end {figure}
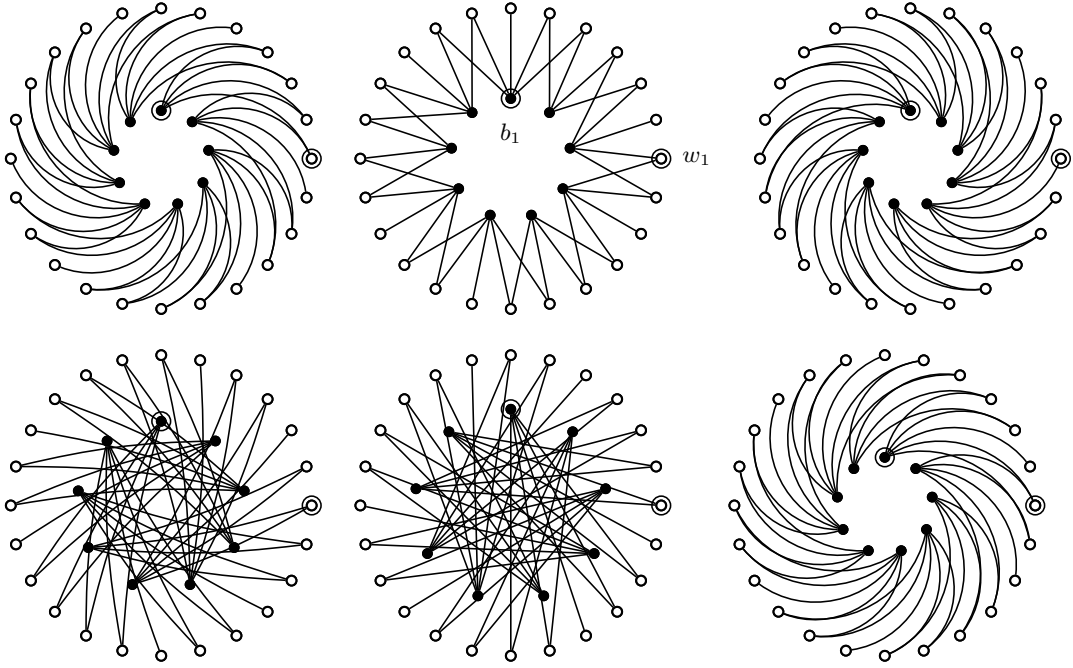

We need one more special case for which we use double cycles with one white vertex removed, that is, all black vertices have degree $4$ except for two consecutive ones that only have degree $3$.

\begin{lemma}%
\label{lem-2m-1}
For $n = 2m-1$ the edges of the complete bipartite graph $K_{m,n}$ can be covered by $\left\lceil \frac{mn}{2m+n-1} \right\rceil$ double cycles with one white vertex removed.
\end{lemma}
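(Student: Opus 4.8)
The plan is to reduce the statement to \Cref{lem-2m} by viewing $K_{m,2m-1}$ as $K_{m,2m}$ with one white vertex deleted; throughout I assume $m \ge 3$, as is needed to invoke \Cref{lem-2m} (the cases $m \le 2$ fall under the last branch of \Cref{thm:completeBip}). First I would apply \Cref{lem-2m} to the graph $K_{m,2m}$, which is legitimate since $n'=2m \ge 2m$. The number of covering double cycles it produces is $\lceil \frac{m\cdot 2m}{2m+2m}\rceil = \lceil \frac{m}{2}\rceil$, and a direct computation gives $\lceil \frac{m(2m-1)}{2m+(2m-1)-1}\rceil = \lceil \frac{m(2m-1)}{2(2m-1)}\rceil = \lceil \frac{m}{2}\rceil$ as well. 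Hence the target count $\lceil \frac{mn}{2m+n-1}\rceil$ for $n=2m-1$ coincides with the count supplied by \Cref{lem-2m} for $n'=2m$, so it suffices to turn the cover of $K_{m,2m}$ into a cover of $K_{m,2m-1}$ by the same number of structures of the required type.

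The key structural observation is that the cover of $K_{m,2m}$ from \Cref{lem-2m} consists of \emph{plain} double cycles with no leaves: there the degree sequence satisfies $\sum_i d_i = 2m+n' = 4m$ with $m$ terms and each $d_i \ge 4$, forcing every $d_i = 4$ and hence $d_i-4=0$ leaves per black vertex. In each such double cycle every white vertex is therefore an ordinary degree-$2$ vertex lying between two consecutive black vertices of the underlying cycle (see \Cref{fig-doubleCycle}), and, since each double cycle spans all $n'=2m$ white vertices, a fixed white vertex $w^\ast$ appears with degree exactly $2$ in every one of the $\lceil m/2\rceil$ double cycles.

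I would then delete $w^\ast$ from each double cycle in the cover. Removing a degree-$2$ white vertex whose two neighbors are consecutive on the cycle lowers exactly those two consecutive black vertices to degree $3$ and leaves all other black vertices at degree $4$; this is precisely a double cycle with one white vertex removed in the sense introduced just before the statement of this lemma. (The two affected black vertices may differ from one double cycle to the next, which is harmless, since the definition only requires them to be consecutive within each single structure.) The union of the resulting $\lceil m/2\rceil$ structures covers every edge of $K_{m,2m}$ except those incident to $w^\ast$, and any edge not incident to $w^\ast$ was covered in the original cover and is untouched by the deletions; hence the union equals $E(K_{m,2m-1})$, upon identifying $K_{m,2m-1}$ with $K_{m,2m}-w^\ast$.

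The main point to check carefully is the structural observation of the second paragraph, namely that \Cref{lem-2m} applied with $n'=2m$ genuinely yields leafless double cycles in which the chosen white vertex occurs with degree $2$ in each member, so that a single deletion produces exactly two consecutive degree-$3$ black vertices; together with the arithmetic identity that makes the two covering counts agree, this is all the reduction needs. An alternative self-contained route mirrors the block-and-shift construction in the proof of \Cref{lem-2m}, using a starting degree sequence with two consecutive $3$'s and the remaining entries $4$. There the delicate part is the even-$m$ case, where the cover must in fact be a perfect edge-decomposition and one must choose the shift so that each black vertex plays the role of a degree-$3$ vertex exactly once while the two degree-$3$ vertices stay consecutive in every structure; this is the reason I prefer the reduction above, which sidesteps that bookkeeping entirely.
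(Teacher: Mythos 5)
Your proof is correct, but it takes a genuinely different route from the paper's. The paper proves the lemma by a direct, self-contained construction: it fixes the assignment of the $2m-1$ white vertices once and for all, and across the $\left\lceil m/2 \right\rceil$ structures it cyclically shifts the black vertices by two positions each time, so that every black vertex occupies every second position of the underlying cycle and hence meets every white vertex (each of which sits between two consecutive positions) at least once. You instead reduce to \Cref{lem-2m}: apply it to $K_{m,2m}$, use the arithmetic coincidence that both covering counts equal $\left\lceil m/2 \right\rceil$, observe that for $n'=2m$ the degree sequence in that construction is forced to be all $4$'s (so the double cycles are leafless, spanning, and every white vertex has degree exactly $2$ in every member), and then delete one fixed white vertex $w^\ast$ from each member, landing exactly on the paper's definition of a double cycle with one white vertex removed. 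Your reduction is clean and sidesteps the shift bookkeeping, but it has two costs, both of which you essentially acknowledge: first, it is not a black-box reduction --- for odd $m$ the \emph{statement} of \Cref{lem-2m} alone does not force the covering double cycles to be spanning and leafless (a cover of $K_{m,2m}$ by $\left\lceil m/2\right\rceil$ double cycles with leaves has slack in the edge count when $m$ is odd), so you genuinely need the internal degree-sequence construction from that proof, which you correctly cite; second, you inherit the hypothesis $m \geq 3$ from \Cref{lem-2m}, whereas the paper's direct construction does not need it --- this is harmless, since \Cref{thm:completeBip} only invokes the present lemma after reducing to the case $m \geq 3$.
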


\begin{proof}
Since $n = 2m-1$, we aim to create $\left\lceil \frac{mn}{2m+n-1} \right\rceil = \left\lceil \frac{m(2m-1)}{2m+2m-1-1} \right\rceil = \left\lceil \frac{m}{2} \right\rceil$ double cycles with one white vertex removed. In these graphs, every white vertex is adjacent to two consecutive black vertices in the circular order. We start with an arbitrary assignment of the black and white vertices for the first double cycle. For every further double cycle, we keep its basic structure and the assignment of the white vertices, but we cyclically shift the black vertices by two indices. That is, each black vertex is placed in every second position for black vertices in the process. Since we produce $\left\lceil \frac{m}{2} \right\rceil$ double cycles with one white vertex removed in total, every black vertex is adjacent to every white vertex at least once. Hence, each edge of $K_{m,n}$ is covered by at least one of the double cycles.
\end{proof}

We are now ready to determine the uncrossed numbers of complete bipartite graphs $K_{m,n}$ for all positive integers $m$ and $n$.

\begin{proof}[Proof of \Cref{thm:completeBip}]
Let $m$ and $n$ be positive integers with $m \leq n$.
Recall that we want to prove that 
\[
   \Ucr(K_{m,n})
= 
  \begin{cases}
    \lceil\frac{mn}{2m+n-2}\rceil, & \text{for } 3 \leq  m \leq n \leq 2m-2 \\
    \lceil\frac{mn}{2m+n-1}\rceil, & \text{for } n=2m-1 \\
    \lceil\frac{mn}{2m+n}\rceil, & \text{for } 6 \leq 2m \leq n \\
    1, & \text{for } m \leq 2.
  \end{cases}
\]

First, if $m \leq 2$, then $K_{m,n}$ is planar and we have $\Ucr(K_{m,n}) = 1$.
Thus, we assume $m \geq 3$ for the rest of the proof.

We now prove the upper bounds on $\Ucr(K_{m,n})$.
By~\eqref{eq-outerthickness}, we have $\Ucr(K_{m,n}) \leq \theta_o(K_{m,n})$ and, since $\theta_o(K_{m,n}) = \left\lceil \frac{mn}{2m+n-2} \right\rceil$ for all $m$ and $n$ with $m \leq n$ by~\eqref{eq-outerthicknessKmn}, we have $\Ucr(K_{m,n}) \leq \left\lceil \frac{mn}{2m+n-2} \right\rceil$.
This matches our desired formula for $m \leq n \leq 2m-2$, so it suffices to consider $n \geq 2m-1$.
If $n=2m-1$, then the edges of $K_{m,n}$ can be covered by $\left\lceil\frac{mn}{2m+n-1} \right\rceil$ double cycles with one white vertex removed by Lemma~\ref{lem-2m-1}.
If $n \geq 2m$, then the edges of $K_{m,n}$ can be covered by $\left\lceil \frac{mn}{2m+n} \right\rceil$ double cycles with leaves by Lemma~\ref{lem-2m}.
Observe that each double cycle $C$ with leaves can be extended to a drawing of $K_{m,n}$ where the edges of the double cycle are uncrossed as each white vertex shares a face in $C$ with all black vertices.
Altogether, this gives the desired upper bounds on $\Ucr(K_{m,n})$ in all cases.

It remains to verify the lower bounds. Recall that $h(G)$ denotes the maximum number of uncrossed edges in any drawing of $G$.
We get $\Ucr(K_{m,n}) \geq  \left\lceil \frac{mn}{h(K_{m,n})}\right\rceil$ from~\eqref{eq-boundH}. Since
\[
h(K_{m,n}) = 
\begin{cases}
    2m+n-2, & \text{for } m=n \\
    2m+n-1, & \text{for } 2m > n > m \\
    2m+n, & \text{for } n \geq 2m,
\end{cases}
\]
by Theorem~\ref{thm-germanPaper}, we obtain the lower bound
\[
\Ucr(K_{m,n}) \geq  
\begin{cases}
    \left\lceil \frac{mn}{2m+n-2}\right\rceil, & \text{for } m=n \\
    \left\lceil \frac{mn}{2m+n-1}\right\rceil, & \text{for } 2m > n > m \\
    \left\lceil \frac{mn}{2m+n}\right\rceil, & \text{for } n \geq 2m.
\end{cases}
\]
This matches our desired formula in all cases except for $m<n\leq 2m-2$.
Thus, we assume that $m+1 \leq n\leq 2m-2$ from now on.
We want to show that $\Ucr(K_{m,n}) \geq \left\lceil \frac{mn}{2m+n-2}\right\rceil$.
Let $\{D_1,\dots,D_k\}$ be an uncrossed collection of drawings of~$K_{m,n}$ and let $D'_1,\dots,D'_k$ be the corresponding uncrossed subdrawings of $K_{m,n}$.
If every drawing $D'_i$ contains at most $2m+n-2$ edges, then $k \geq \left\lceil \frac{mn}{2m+n-2}\right\rceil$.

Thus, suppose that some $D'_i$ contains at least $2m+n-1$ edges.
Without loss of generality, assume that it is the case for $D'_1$.
Then, $D'_1$ contains exactly $2m+n-1$ edges as \[h(K_{m,n}) \leq 2m+n-1\text{ for }m<n \leq 2m-2.\]
We have $3 \leq m<n$ and thus we can apply Lemma~\ref{lem-compBipStructure}.
Since $n\leq 2m-2$, Lemma~\ref{lem-compBipStructure} implies that $D'_1$  contains a vertex $v$ that is adjacent to all $m$ black vertices.
In every drawing $D'_j$ with $j > 1$, the edges incident to $v$ are already counted for $D'_1$, thus we can consider the drawings $D'_2,\dots,D'_k$ together with the uncrossed edges not incident with $v$ in $D'_1$ to be an uncrossed collection of subdrawings for $K_{m,n-1}$ obtained from $K_{m,n}$ by removing $v$.
Then, each $D'_j$ for $j\in[2,k]$ contributes at most $h(K_{m,n-1})$ new uncrossed edges of~$K_{m,n-1}$.

If each drawing from $D'_2,\dots,D'_k$ contributes at most $2m+n-3$ edges of $K_{m,n-1}$, then 
\[mn \leq (2m+n-1) + (k-1)(2m+n-3) = k(2m+n-3) + 2\]
as every edge of $K_{m,n}$ is uncrossed in some drawing $D'_i$ with $i \in\{1,\dots,k\}$.
This can be rewritten as \begin{align}
  k \geq \frac{mn-2}{2m+n-3}.\label{eq-k}
\end{align}
We have $\frac{mn-2}{2m+n-3} \geq \frac{mn}{2m+n-2}$ for $m \geq 4$ and $n \geq m+2$, and for $m \geq 5$ and $n=m+1$, as the inequality can be equivalently expressed as $n \geq (4m-4)/(m-2)$.
Then, we get the desired bound $k \geq \left\lceil \frac{mn}{2m+n-2}\right\rceil$ as
\begin{equation}
\label{eq-boundingK}
k \geq \frac{mn-2}{2m+n-3} \geq \frac{mn}{2m+n-2}
\end{equation}
and $k$ is an integer.

Recall that we want to prove the lower bound $k \geq \left\lceil \frac{mn}{2m+n-2}\right\rceil$ in each case $m +1 \leq n \leq 2m-2$ where we know that $D'_1$ contains $2m+n-1$ uncrossed edges.

If $n=m+1$, then Theorem~\ref{thm-germanPaper} gives $h(K_{m,n-1}) =2m+(n-1)-2=2m+n-3$ and we see that each drawing from $D'_2,\dots,D'_k$ contributes at most $2m+n-3$ edges of $K_{m,n-1}$.
We are then done by~\eqref{eq-boundingK} if $m \geq 5$.
Otherwise, we have $m=3,n=4$ or $m=4,n=5$.
In both of these cases, the bound (Inequality (\ref{eq-k})) $k \geq \frac{mn-2}{2m+n-3}$ implies that $k \geq 2$, which also matches the upper bound on $\Ucr(K_{m,n})$.

Finally, assume $n \geq m+2$. 
Since we also have $n \leq 2m-2$, we obtain $m \geq 4$.
It follows from Theorem~\ref{thm-germanPaper} that $h(K_{m,n-1}) =2m+(n-1)-1=2m+n-2$ and we get 
\[mn \leq (2m+n-1) + (k-1)(2m+n-2) = k(2m+n-2) + 1, \]
which can be rewritten as $k \geq (mn-1)/(2m+n-2)$. 
Since $k$ is an integer, this implies the desired bound $k \geq \left\lceil \frac{mn}{2m+n-2}\right\rceil$ if $mn$ is not congruent to $1$ modulo $2m+n-2$.
Thus, we assume that $mn = a(2m+n-2) + 1$ for some positive integer $a$.
If every drawing from $D'_2,\dots,D'_k$ contains at most $2m+n-3$ uncrossed edges of $K_{m,n-1}$, then we are done by~\eqref{eq-boundingK} as $m \geq 4$ and $n \geq m+2$.
Otherwise, some drawing from $D'_2,\dots,D'_k$ contains at least $2m+n-2$ uncrossed edges of $K_{m,n-1}$.
Without loss of generality, this happens for $D'_2$.
Since $n \geq m+2$, we obtain $n-1 \geq m+1$ while we are in the case where $n\le 2m-2$ (so $n-1\le 2m-3$). Hence, Lemma~\ref{lem-compBipStructure} implies that $D'_2$  contains a vertex $v'$ that is adjacent to all $m$ black vertices.
Hence, we can conclude the proof, as at least one of the edges incident with $v$ was already uncrossed in $D'_1$. Which in turn (using Theorem~\ref{thm-germanPaper}) implies that each drawing $D'_i$ for $i\in[2,k]$ contains at most $2m+n-3$ uncrossed edges.
\end{proof}

\section{Proof of \texorpdfstring{\Cref{thm-lowerBound}}{Theorem 3}}

Here, we show that every connected graph $G$ with $n \geq 3$ vertices and $m \geq 0$ edges satisfies 
\[
\Ucr(G) \geq \left\lceil\frac{m}{f(n,m)}\right\rceil
\]
where $f(n,m) = \left(3n-5+\sqrt{(3n-5)^2-4m}\right)/2$.

Let $\mathcal{D}(G) = \{D_1,\dots,D_k\}$ be an uncrossed collection of drawings of $G$.
For every $i \in [k]$, let $D'_i$ be a subdrawing of $D_i$ containing only edges of $D_i$ that are uncrossed in $D_i$.
By \Cref{lem-basicProperties}, each drawing $D'_i$ is then a plane graph with the property that every edge of $G$ that is not an edge of $D'_i$ is contained in a single face of $D'_i$.
Moreover, since $G$ is connected, we can assume without loss of generality by this lemma that each $D'_i$ represents a connected subgraph of~$G$ as to bound $\Ucr(G)$ from below it suffices to consider drawings $D'_i$ with as many edges as possible.

Fix an arbitrary $i \in [k]$.
The number of vertices of $D'_i$ equals $n$.
We use $m_i$ to denote the number of edges of $D'_i$ and we will show that $m_i \leq f(n,m)$.

We set $\mathcal{F}_i$ to be the set of faces of $D'_i$ and $f_i=|\mathcal{F}_i|$.
For a face $F$ of $D'_i$, we use $v(F)$ for the number of vertices of $D'_i$ that are contained in the boundary of $F$ and we write $|F|$ for the number of times we meet an interior of an edge of $D'_i$ when traversing $F$ along its boundary.
That is, $|F|$ is the length of the facial walk.
Note that each edge can be counted once or twice in $|F|$ and so we have $v(F) \leq |F|$ as $D'_i$ represents a connected subgraph of $G$.
We assume that at least one edge of $F$ is counted once in $|F|$ and that $v(F) \geq 3$ for every face $F$ as otherwise there is only a single face in $\mathcal{F}_i$ and $D'_i$ is a tree with $m_i \leq n-1 \leq f(n,m)$ for $n \geq 3$. 
Also, observe that 
\begin{equation}
\label{eq-faces1}
\sum_{F \in \mathcal{F}_i} |F| = 2m_i
\end{equation}
as every edge is incident to a face of $D'_i$ from the left and from the right.

Since every edge of $G$ that is not an edge of $D'_i$ is contained in a single face of $D'_i$, we have
\begin{equation}
\label{eq-faces2}
\sum_{F \in \mathcal{F}_i} \left(\binom{v(F)}{2} - v(F)\right) \geq m-m_i.
\end{equation}
This is because vertices of each face $F$ can span up to $\binom{v(F)}{2}$ edges of $D_i$ and at least $v(F)$ pairs of vertices of $D_i$ are already used for edges of $D'_i$ as each face $F$ contains an edge that is counted only once in~$|F|$.
The left hand side of~\eqref{eq-faces2} can be rewritten as 
\[\frac{1}{2}\sum_{F \in \mathcal{F}_i} v(F)(v(F)-3).\]
Since $v(F) \geq 3$ and $|F| \geq v(F)$ for every face $F$ from $\mathcal{F}_i$, we obtain 
\[
\frac{1}{2}\sum_{F \in \mathcal{F}_i} |F|(|F|-3)\geq m-m_i.
\] 
Since $|F|-3 \geq 0$, the left-hand side can be bounded from above by
\[\frac{1}{2}\left(\sum_{F \in \mathcal{F}_i} |F|\right)\left(\sum_{F \in \mathcal{F}_i} (|F|-3)\right) = m_i(2m_i-3f_i)\]
where we used~\eqref{eq-faces1} twice.
Altogether, we obtain $m_i(2m_i-3f_i) \geq m-m_i$, which can be rewritten as
\[
f_i \leq \frac{2m_i}{3} - \frac{m-m_i}{3m_i}.
\]
Plugging this estimate into Euler's formula $n-m_i+f_i=2$, we get
\[m_i \leq 3n - 5 -\frac{m}{m_i},\]
which after solving the corresponding quadratic inequality for $m_i$ gives the final estimate
\[m_i \leq (3n-5+\sqrt{(3n-5)^2-4m})/2 = f(n,m).\]

Since $i$ was arbitrary, we see that each drawing $D'_i$ contains at most $f(n,m)$ edges of~$G$ and therefore, we indeed have 
\[k \geq  \Ucr(G) \geq \left\lceil\frac{m}{f(n,m)}\right\rceil.\]{\flushright\vspace{-2.4em}\qed} %

\section{Proof of \texorpdfstring{\Cref{theorem:hard:ecr}}{Theorem 4}}
\label{sec:ecrNPHproof}

\begin{figure}
	\centering
	\begin{tikzpicture}
		\boundingbox
		\foreach \n [evaluate=\n as \a using 90-360*(\n-1)/8] in {1,2,...,8}{\node[origV] (\n) at (\a:\radius) {$\n$};}%
		\foreach \n in {1,2,...,5,8}{\gateedge[red!70!black]{(\n)};}%
		\foreach \n in {6,7}{\gateedge[black]{(\n)};}%
		\node[centerV] (m) at (0,0) {\bf c};

		\origedgeDetailed{1}{2}{}{0.5}
		\origedge{1}{8}{}{0.5}
		\origedge{2}{3}{}{0.5}
		\origedge{3}{4}{}{0.5}
		\origedge{3}{5}{bendLmedbig}{0.5}
		\origedge{3}{8}{bendRbig}{0.5}
		\origedge{5}{6}{}{0.5}
		\origedge{5}{7}{bendLmedbig}{0.5}
		\origedge{5}{8}{bendLbig}{0.5}
		\origedge{6}{7}{}{0.5}
		\origedge{7}{8}{}{0.5}

		\origedge[red!70!black]{1}{4}{bendLsmall}{0.5}
		\origedge[red!70!black]{1}{7}{}{0.5}
		\origedge[red!70!black]{2}{6}{bendLsmall}{0.5}
		\origedge[red!70!black]{3}{7}{bendRsmall}{0.5}
		\origedgeDetailed[red!70!black]{4}{6}{}{0.6}
	\end{tikzpicture}
	\caption{
		\ecrProblem~instance for proof of \texorpdfstring{\Cref{theorem:hard:ecr}}{Theorem 4} in \cref{sec:ecrNPHproof}.
		Red edges are crossed.
		Thick edges represent $M$-bundles corresponding to edges from the {\sc Maximum Outerplanar Subgraph} instance $G$, see in detail the edges between $1$ and $2$ and between $6$ and $4$.
		The dashed edges and $c$ form the central star.
	}
	\label{fig:hard:ecr-red}
\end{figure}
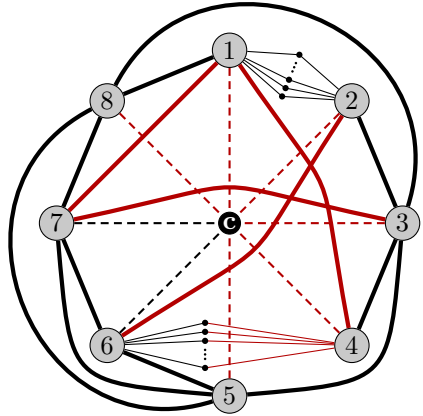

In this section, we prove that \ecrProblem{} is \NP-complete.
Membership of this problem in the class \NP{} is trivial.
To show \NP-hardness, we reduce from the following \NP-complete problem ~\cite{MaxOuterplanarsubgraphNPhard1,MaxOuterplanarsubgraphNPhard2}.

\Prob{Maximum Outerplanar Subgraph}{A graph $G = (V,E)$ and a positive integer $k$.}{Is there an outerplanar subgraph of $G$ with at least $k$ edges?}

Assume an instance of {\sc Maximum Outerplanar Subgraph}.
Let $M > |V|$, say $M=2|V|$, and $k'=|E|-k$.
We augment $G$ into a graph $G'$, and show that $G'$ can be drawn with at most $M k'+|V|$ crossed edges, if and only if $G$ admits an outerplanar subgraph with at least $k$ edges.
The graph $G'$ is obtained via two augmenting steps:
We add a \emph{central star}, i.e., a vertex with an edge to each original vertex of $G$.
Then, we replace each original edge in $G$ by $M$ parallel paths of length two, which we call an \emph{$M$-bundle}.
An example of this transformation can be seen in \cref{fig:hard:ecr-red}.

Suppose there is a drawing of $G'$ with at most $M k' + |V|$ crossed edges.
We want to modify this drawing into a drawing of $G$.
To this end, we first remove every path belonging to an $M$-bundle, if either of its two edges is crossed.
We also remove the central vertex and all of its incident edges.
All remaining edges are uncrossed and belong to an $M$-bundle path.
As there are at most $M k' + |V| < M (k'+1)$ crossed edges in the drawing, for at least $|E|-k'=k$ edges from $G$ there is at least one path of its corresponding $M$-bundle that is not removed.
We contract for each edge of $G$ one edge of one of the remaining paths of its $M$-bundle and remove all other $M$-bundle paths.
The vertices from $G$ all share the face created by removing the central vertex and all vertices from $M$-bundles are either contracted or removed.
Thus, we have an outerplanar drawing of a subgraph of $G$ with at least $k$ edges.

Similarly, for every outerplanar subgraph $H$ of $G$ with at least $k$ edges we can construct a drawing of $G'$ with at most $M k' + |V|$ crossed edges.
First, we draw $H$ in an outerplanar embedding, then we draw the central star into the outer face.
Next, we draw the at most $|E|-k = k'$ remaining edges of $G$ in such a way that they only cross one another and the edges of the central star.
Finally, we replace every edge of $G$ with an $M$-bundle.
The newly added vertices are positioned in such a way that at most one of the edges of each path is crossed.
Therefore, there are at most $M k'$ crossed edges from the $M$-bundles and at most $|V|$ crossed edges from the central star, for a total of at most $M k' + |V|$ crossed edges.
\hfill\qed %

\section{Proof of \texorpdfstring{\Cref{thm-hardness}}{Theorem 6}}

\begin{figure}
	\centering
	\begin{subfigure}[t]{.45\textwidth}
		\centering
		\begin{tikzpicture}
			\boundingbox
			\foreach \n/\a in {1/90,2/45,3/360,4/315,5/270,6/225,7/180,8/135}{
				\node[origV] (\n) at (\a:\radius) {$\n$};
				\node[helperV] (m\n) at ($(\n)!.5!(0,0)$) {};
			}
			\node[centerV] (m) at (0,0) {\bf c};

			\foreach \u/\v in {m1/1, m2/2, m3/3, m4/4, m5/5, m8/8}
				\draw[splitE,        red!70!black] (\u) to (\v);
			\foreach \u/\v in {m1/m, m3/m, m4/m, m5/m, m7/m, m8/m}
				\draw[splitE,        black] (\u) to (\v);
			\foreach \u/\v in {m2/m, m6/6, m6/m, m7/7}
				\draw[splitE,        gray] (\u) to (\v);

			\draw[origE,         gray] (1) to (2);
			\draw[origE,         gray] (1) to (8);
			\draw[origE] (2) to (3);
			\draw[origE,         gray] (3) to (4);
			\draw[origE,bendLmedbig,         gray] (3) to (5);
			\draw[origE,bendRbig] (3) to (8);
			\draw[origE] (5) to (6);
			\draw[origE,bendLmedbig] (5) to (7);
			\draw[origE,bendLbig] (5) to (8);
			\draw[origE] (6) to (7);
			\draw[origE] (7) to (8);

			\draw[origE,red!70!black,bendLsmall] (1) to (4);
			\draw[origE,red!70!black] (1) to (7);
			\draw[origE,red!70!black,bendLmedbig] (2) to (6);
			\draw[origE,red!70!black,bendRmedbig] (3) to (7);
			\draw[origE,red!70!black] (4) to (6);
		\end{tikzpicture}
		\caption{\centering First drawing.}
		\label{fig:hard:uc-ot:d1}
	\end{subfigure}
	\hfill
	\begin{subfigure}[t]{.45\textwidth}
		\centering
		\begin{tikzpicture}
			\boundingbox
			\foreach \n/\a in {3/225,6/360}{\node[origV,blue,scale=1.15] at (\a:\radius) {$\n$};}%
			\foreach \n/\a in {1/90,2/45,6/360,4/315,5/270,3/225,7/180,8/135}{
				\node[origV] (\n) at (\a:\radius) {$\n$};
				\node[helperV] (m\n) at ($(\n)!.5!(0,0)$) {};
			}
			\node[centerV] (m) at (0,0) {\bf c};

			\foreach \u/\v in {m1/1, m2/2, m3/3, m4/4, m5/5, m8/8}
				\draw[splitE,        black] (\u) to (\v);
			\foreach \u/\v in {m1/m, m3/m, m4/m, m5/m, m7/m, m8/m}
				\draw[splitE, red!70!black] (\u) to (\v);
			\foreach \u/\v in {m2/m, m6/6, m6/m, m7/7}
				\draw[splitE,         gray] (\u) to (\v);

			\draw[origE,         gray] (1) to (2);
			\draw[origE,         gray] (1) to (8);
			\draw[origE,red!70!black,bendRsmall] (2) to (3);
			\draw[origE,         gray,bendRmedbig] (3) to (4);
			\draw[origE,         gray] (3) to (5);
			\draw[origE,red!70!black,bendRmed] (3) to (8);
			\draw[origE,red!70!black,bendLmed] (5) to (6);
			\draw[origE,red!70!black,bendRmed] (5) to (7);
			\draw[origE,red!70!black,bendRsmall] (5) to (8);
			\draw[origE,red!70!black,bendLsmall] (6) to (7);
			\draw[origE,         gray] (7) to (8);

			\draw[origE,bendLbig] (1) to (4);
			\draw[origE,bendRmedbig] (1) to (7);
			\draw[origE] (2) to (6);
			\draw[origE] (3) to (7);
			\draw[origE] (4) to (6);
		\end{tikzpicture}
		\caption{\centering Second drawing. Vertices~$3$~and~$6$~swapped~places.}
		\label{fig:hard:uc-ot:d2}
	\end{subfigure}
	\caption{
		An instance of the reduction from {\sc OuterThickness} to \UcrProblem.
        The original graph $G$ in this instance is drawn with solid edges and has outerthickness~$2$ (as the two subdrawings in solid black and gray edges certify).
		The dashed edges and black vertices form the central star around $c$ added to $G$ in the reduction.
        In each drawing, all crossed edges are red and uncrossed edges of the particular drawing are black, and gray edges are uncrossed in both drawings.
	}
	\label{fig:hard:uc-ot}
\end{figure}
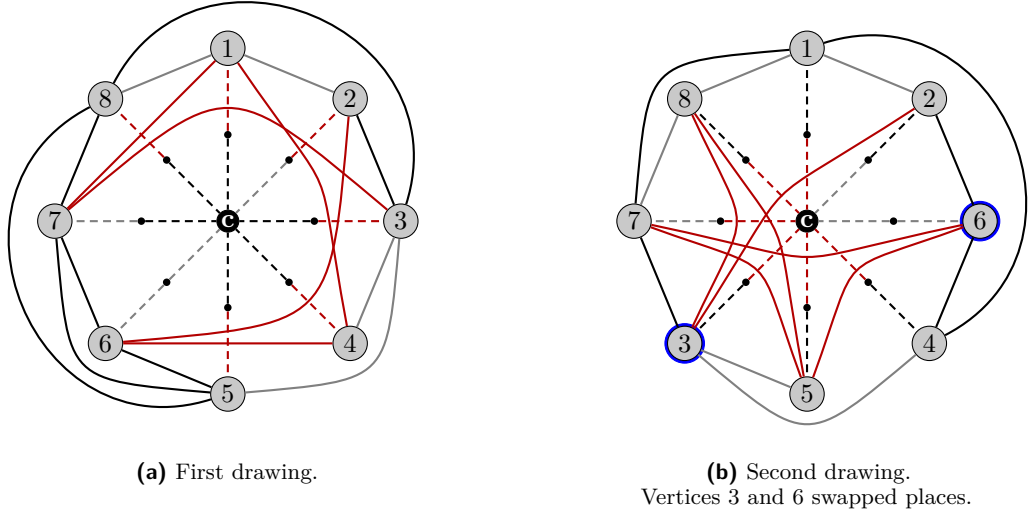

We show that if {\sc Outerthickness} is \NP-hard, then \UcrProblem{} is \NP-hard as well using a reduction from {\sc Outerthickness} to \UcrProblem.

The reduction employs similar arguments as used in \Cref{sec:ecrNPHproof}.
Let $(G,k)$ be an instance of the problem {\sc Outerthickness}.
We augment the input graph $G$ into a graph $G'$ by adding a vertex and connecting each vertex of $G$ to it with a path of length two.
We call the added structure the \emph{central star}.
See \cref{fig:hard:uc-ot} for an example of this transformation.

Let $D$ be a drawing of $G'$.
Consider the uncrossed subdrawing $D'_G$ consisting of the vertices and all uncrossed edges from $G$.
As there is a path in $D\setminus D'_G$ between each two vertices from $G$, we know that $D'_G$ is outerplanar.
Thus, if $\Ucr(G') \leq k$ and $D_1,\dots,D_k$ is an uncrossed collection of $G'$, then the respective subdrawings restricted to $G$ decompose $G$ into $k$ outerplanar graphs.

Conversely, if $G$ can be decomposed into $k\geq2$ outerplanar subgraphs $G_1, \dots, G_k$, then we can construct an uncrossed collection $D_1,\dots,D_k$ of $G'$ in the following way:
In every drawing $D_i$, we first draw $G_i$ as an outerplanar graph and we embed the central star in the outer face.
Then, we draw the remaining original edges in such a way that they only cross each other and edges from the central star.
In $D_1$, all crossings on the central star lie on edges incident to vertices of $G$, and in all other drawings, the crossings on the central star involve only edges incident to the universal vertex.
This way we assure that also every edge of the central star is uncrossed in some drawing.
\hfill \qed%

\section{Conclusions and Open Problems}

We provided exact values of the uncrossed number for complete and complete bipartite graphs.
The hypercube graphs form another natural graph class to consider as their outerthickness and thickness were determined exactly; see~\cite{guyNow90I,klinert67}.
However, we are not aware of any formula for the uncrossed number for the hypercube graphs.

\begin{question}
Determine the exact value of the uncrossed number for the hypercube graphs.
\end{question}

In \Cref{thm-lowerBound}, we determined a general lower bound on $\Ucr(G)$ in terms of the number of the edges and vertices of $G$ by showing $\Ucr(G)\ge\lceil\frac{m}{c n}\rceil - O(n) - O(m)$ for some constant $c$ with $0 < c \leq 3$.
In particular, we argued that the smallest obtainable constant $c$ is approximately $2.82$ for the case of dense $n$-vertex graphs with $\varepsilon n^2$ edges where $\varepsilon \in (0,1/2)$ is a fixed constant.
Can one obtain a better leading constant in the general lower bound on $\Ucr(G)$ for such dense graphs $G$?

We also propose investigating other properties of the uncrossed number. 
We conjecture that the uncrossed number can be arbitrarily far apart from the outerthickness despite them being quite similar on the graph classes we mainly investigated in this paper.
In fact, it follows from our results that the difference between the outerthickness and the uncrossed number of complete and complete bipartite graphs is never larger than one.

\begin{conjecture}
  For every positive integer $k$, there is a graph $G$ such that 
  \[\theta_o(G)-\Ucr(G) \geq k.\]
\end{conjecture}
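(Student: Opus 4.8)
The plan is to exploit the inequality chain $\theta(G) \le \Ucr(G) \le \theta_o(G) \le 2\theta(G)$ and to search for a graph family $\{G_t\}_{t\ge 1}$ whose outerthickness is close to its Gon\c{c}alves upper bound $2\theta$ while its uncrossed number stays close to the lower bound $\theta$. If one can arrange $\theta_o(G_t) \ge (2-o(1))\theta(G_t)$ together with $\Ucr(G_t) \le (1+o(1))\theta(G_t)$ and $\theta(G_t)\to\infty$, then $\theta_o(G_t)-\Ucr(G_t)\to\infty$, which is far more than required. The reason the complete and complete bipartite graphs fail to show such a gap is structural: by \Cref{thm-ringel64,thm-germanPaper}, every near-maximal uncrossed subdrawing of $K_n$ or $K_{m,n}$ is forced to be essentially outerplanar (a wheel, respectively a double cycle), so its edge count is pinned near the outerplanar bound of $2n-3$ edges and cannot beat a single outerplanar layer. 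Any successful family must therefore admit uncrossed subdrawings that are genuinely \emph{non-outerplanar}, that is, that spread the vertices over several bounded faces, yet still host every remaining edge.

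Concretely, I would build $G_t$ by gluing many copies of a small planar-but-dense gadget along a shared skeleton so that both parameters accumulate \emph{additively} rather than as a maximum; a plain disjoint union is useless here, since both $\theta_o$ and $\Ucr$ of a disjoint union equal the maximum over the components. In each drawing of the intended uncrossed collection, the gadget copies would be drawn so that their uncrossed edges form a non-outerplanar plane subdrawing with one medium-sized face per copy, and the edges that must still be crossed in that particular drawing are confined to a single copy and hosted inside its own face. In this way the face-sharing condition of \Cref{lem-basicProperties} is met \emph{locally}, without forcing all vertices onto one common outer face. The lower bound $\theta_o(G_t)\ge(2-o(1))\theta(G_t)$ would be obtained by a packing argument (each outerplanar layer spans at most $2n-3$ edges on $n$ vertices, and each gadget additionally carries a $K_4$- or $K_{2,3}$-type obstruction that prevents one layer from absorbing two copies), while $\Ucr(G_t)\le(1+o(1))\theta(G_t)$ would follow from the explicit drawings above, with the number of drawings checked against \eqref{eq-boundH}.

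The main obstacle is precisely the tension between these two requirements. In every drawing of an uncrossed collection, \emph{all} edges of $G$ that are not uncrossed there must have both endpoints on a common face of the uncrossed subdrawing; pushing many such edges into faces drives the subdrawing toward outerplanarity and caps its edge density near $2n$, which is exactly the outerplanar layer density and collapses the gap back to a constant (this is the mechanism behind the constant gap for $K_n$ via \Cref{thm-ringel64}, and it is also visible in the loss incurred by the estimate in \Cref{thm-lowerBound} for dense graphs). The heart of a proof is thus a gadget in which the edges requiring hosting in any single drawing are strictly local, so that a non-outerplanar subdrawing with many small faces suffices, while globally the graph still resists being split into few outerplanar pieces. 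A secondary difficulty is securing additivity: the gluing skeleton must make $\theta_o$ grow like $2t$ and $\Ucr$ grow like $t$ at the same time, which likely demands a lower-bound argument for $\theta_o$ that goes beyond edge counting, for example exhibiting $\Omega(t)$ interleaved obstructions that no outerplanar layer can simultaneously resolve. I expect designing such a gadget and proving the matching $\theta_o$ lower bound to be the decisive and hardest step.
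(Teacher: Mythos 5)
The statement you are trying to prove is not a theorem in the paper at all: it is stated there as an open \emph{conjecture} (in the Conclusions section), and the authors offer no proof of it. So there is no paper proof to compare against, and the only question is whether your argument actually establishes the statement. It does not. What you have written is a research plan, not a proof: the graph family $G_t$ is never constructed (no gadget is specified, no gluing skeleton is described), and neither of the two bounds your plan hinges on --- $\theta_o(G_t)\ge(2-o(1))\,\theta(G_t)$ and $\Ucr(G_t)\le(1+o(1))\,\theta(G_t)$ with $\theta(G_t)\to\infty$ --- is proven or even reduced to a checkable claim. You acknowledge this yourself in the final sentence, where you identify the gadget design and the outerthickness lower bound as the ``decisive and hardest step.'' That step is the entire content of the conjecture; everything before it is framing.

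That said, your strategic analysis is sound and consistent with the paper. You correctly note that disjoint unions cannot work (both $\theta_o$ and $\Ucr$ of a disjoint union equal the maximum over components, so a gluing construction is needed); you correctly identify, via \Cref{lem-basicProperties}, the central tension that every edge not uncrossed in a given drawing must have its endpoints on a common face of the uncrossed subdrawing, which is exactly the mechanism by which \Cref{thm-ringel64,thm-germanPaper} pin the extremal uncrossed subdrawings of $K_n$ and $K_{m,n}$ near outerplanar density and keep the gap at most $1$ for those families; and the idea of seeking uncrossed subdrawings that distribute vertices over many bounded faces, so that the face-sharing condition is satisfied locally rather than by one outer face, is a reasonable line of attack. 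But until a concrete family is exhibited and both bounds are established, the conjecture remains exactly as open as it is in the paper.
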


Lastly, it would be interesting to finally settle the computational complexity of the outerthickness problem.
We conjecture that the {\sc Outerthickness} problem is \NP-hard.
Note that if true, this would also settle the computational complexity of \UcrProblem{} by \Cref{thm-hardness}.

\paragraph{Acknowledgements.}
We would like to thank the organizers of the Crossing Numbers Workshop 2023 where this research was initiated.
We also would like to thank the anonymous reviewer who pointed out a simplification in the last case of our proof of Theorem~\ref{thm:completeBip}.

\bibliography{cro.bib}

\begin{thebibliography}{10}

\bibitem{GD24}
M.~Balko, P.~Hliněný, T.~Masařík, J.~Orthaber, B.~Vogtenhuber, and M.H.
  Wagner.
\newblock On the uncrossed number of graphs.
\newblock In Stefan Felsner and Karsten Klein, editors, {\em 32nd International
  Symposium on Graph Drawing and Network Visualization ({GD 2024})}, volume 320
  of {\em Leibniz International Proceedings in Informatics (LIPIcs)}, pages
  18:1--18:13, Dagstuhl, Germany, 2024. Schloss Dagstuhl -- Leibniz-Zentrum
  f{\"u}r Informatik.
\newblock \href {https://doi.org/10.4230/LIPIcs.GD.2024.18}
  {\path{doi:10.4230/LIPIcs.GD.2024.18}}.

\bibitem{WWIIhistory}
L.~Beineke and R.~Wilson.
\newblock The early history of the brick factory problem.
\newblock {\em Math Intelligencer}, 32:41--48, 2010.
\newblock \href {https://doi.org/10.1007/s00283-009-9120-4}
  {\path{doi:10.1007/s00283-009-9120-4}}.

\bibitem{GareyJ83}
M.~R. Garey and D.~S. Johnson.
\newblock Crossing number is {NP-complete}.
\newblock {\em SIAM J. Algebr. Discrete Methods}, 4(3):312--316, September
  1983.
\newblock \href {https://doi.org/10.1137/060403} {\path{doi:10.1137/060403}}.

\bibitem{Goncalves2}
D.~Gon\c{c}alves.
\newblock Edge partition of planar graphs into two outerplanar graphs.
\newblock In {\em Proceedings of the Thirty-Seventh Annual ACM Symposium on
  Theory of Computing}, STOC '05, pages 504--512, New York, NY, USA, 2005.
  Association for Computing Machinery.
\newblock \href {https://doi.org/10.1145/1060590.1060666}
  {\path{doi:10.1145/1060590.1060666}}.

\bibitem{guyNow90I}
R.~K. Guy and R.~J. Nowakowski.
\newblock The outerthickness \& outercoarseness of graphs. {I}. {T}he complete
  graph \& the {$n$}-cube.
\newblock In {\em Topics in combinatorics and graph theory ({O}berwolfach,
  1990)}, pages 297--310. Physica, Heidelberg, 1990.
\newblock \href {https://doi.org/10.1007/978-3-642-46908-4_34}
  {\path{doi:10.1007/978-3-642-46908-4_34}}.

\bibitem{guyNow90II}
R.~K. Guy and R.~J. Nowakowski.
\newblock The outerthickness \& outercoarseness of graphs. {II}. {T}he complete
  bipartite graph.
\newblock In {\em Contemporary methods in graph theory}, pages 313--322.
  Bibliographisches Inst., Mannheim, 1990.

\bibitem{Har61}
F.~Harary.
\newblock Research problem 28.
\newblock {\em Bull. Am. Math. Soc.}, 67(6):542, November 1961.
\newblock \href {https://doi.org/10.1090/S0002-9904-1961-10677-0}
  {\path{doi:10.1090/S0002-9904-1961-10677-0}}.

\bibitem{haMen74}
H.~Harborth and I.~Mengersen.
\newblock Edges without crossings in drawings of complete graphs.
\newblock {\em J. Combinatorial Theory Ser. B}, 17:299--311, 1974.
\newblock \href {https://doi.org/10.1016/0095-8956(74)90035-5}
  {\path{doi:10.1016/0095-8956(74)90035-5}}.

\bibitem{haMen90}
H.~Harborth and I.~Mengersen.
\newblock Edges with at most one crossing in drawings of the complete graph.
\newblock In {\em Topics in combinatorics and graph theory ({O}berwolfach,
  1990)}, pages 757--763. Physica, Heidelberg, 1990.
\newblock \href {https://doi.org/10.1007/978-3-642-46908-4_85}
  {\path{doi:10.1007/978-3-642-46908-4_85}}.

\bibitem{haThu96}
H.~Harborth and C.~Th\"{u}rmann.
\newblock Numbers of edges without crossings in rectilinear drawings of the
  complete graph.
\newblock In {\em Proceedings of the {T}wenty-seventh {S}outheastern
  {I}nternational {C}onference on {C}ombinatorics, {G}raph {T}heory and
  {C}omputing ({B}aton {R}ouge, {LA}, 1996)}, volume 119, pages 79--83, 1996.

\bibitem{masHlin23}
P.~Hliněný and T.~Masařík.
\newblock Minimizing an uncrossed collection of drawings.
\newblock In {\em Graph Drawing and Network Visualization}, pages 110--123.
  Springer Nature Switzerland, 2023.
\newblock \href {https://doi.org/10.1007/978-3-031-49272-3_8}
  {\path{doi:10.1007/978-3-031-49272-3_8}}.

\bibitem{klinert67}
Michael Kleinert.
\newblock Die {D}icke des {$n$}-dimensionalen {W}\"{u}rfel-{G}raphen.
\newblock {\em J. Combinatorial Theory}, 3(1):10--15, 1967.
\newblock In German.
\newblock \href {https://doi.org/10.1016/S0021-9800(67)80010-3}
  {\path{doi:10.1016/S0021-9800(67)80010-3}}.

\bibitem{MaxOuterplanarsubgraphNPhard1}
P.~C. Liu and R.~C. Geldmacher.
\newblock On the deletion of nonplanar edges of a graph.
\newblock In {\em Proceedings of the {T}enth {S}outheastern {C}onference on
  {C}ombinatorics, {G}raph {T}heory and {C}omputing ({F}lorida {A}tlantic
  {U}niv., {B}oca {R}aton, {F}la., 1979)}, volume XXIII--XXIV of {\em Congress.
  Numer.}, pages 727--738. Utilitas Math., Winnipeg, MB, 1979.

\bibitem{Mansfield}
A.~Mansfield.
\newblock Determining the thickness of graphs is {NP}-hard.
\newblock {\em Mathematical Proceedings of the Cambridge Philosophical
  Society}, 93(1):9--23, 1983.
\newblock \href {https://doi.org/10.1017/S030500410006028X}
  {\path{doi:10.1017/S030500410006028X}}.

\bibitem{germanPaper}
I.~Mengersen.
\newblock Die {M}aximalzahl von kreuzungsfreien {K}anten in {D}arstellungen von
  vollst\"{a}ndigen {$n$}-geteilten {G}raphen.
\newblock {\em Math. Nachr.}, 85:131--139, 1978.
\newblock In German.
\newblock \href {https://doi.org/10.1002/mana.19780850110}
  {\path{doi:10.1002/mana.19780850110}}.

\bibitem{ThickSurvey}
P.~Mutzel, T.~Odenthal, and M.~Scharbrodt.
\newblock The thickness of graphs: {A} survey.
\newblock {\em Graphs Comb.}, 14(1):59--73, 1998.
\newblock \href {https://doi.org/10.1007/PL00007219}
  {\path{doi:10.1007/PL00007219}}.

\bibitem{poranMakin05}
T.~Poranen and E.~M\"{a}kinen.
\newblock Remarks on the thickness and outerthickness of a graph.
\newblock {\em Comput. Math. Appl.}, 50(1-2):249--254, 2005.
\newblock \href {https://doi.org/10.1016/j.camwa.2004.10.048}
  {\path{doi:10.1016/j.camwa.2004.10.048}}.

\bibitem{ringel64}
G.~Ringel.
\newblock Extremal problems in the theory of graphs.
\newblock In {\em Theory of {G}raphs and its {A}pplications ({P}roc. {S}ympos.
  {S}molenice, 1963)}, pages 85--90. Publ. House Czech. Acad. Sci., Prague,
  1964.

\bibitem{Schaefer18}
M.~Schaefer.
\newblock {\em Crossing Numbers of Graphs}.
\newblock Discrete Mathematics and Its Applications. CRC Press, January 2018.
\newblock \href {https://doi.org/10.1201/9781315152394}
  {\path{doi:10.1201/9781315152394}}.

\bibitem{MaxOuterplanarsubgraphNPhard2}
M.~Yannakakis.
\newblock Node-and edge-deletion {NP}-complete problems.
\newblock In {\em Proceedings of the Tenth Annual ACM Symposium on Theory of
  Computing}, STOC '78, pages 253--264, New York, NY, USA, 1978. Association
  for Computing Machinery.
\newblock \href {https://doi.org/10.1145/800133.804355}
  {\path{doi:10.1145/800133.804355}}.

\bibitem{Zarankiewicz1955}
K.~Zarankiewicz.
\newblock On a problem of {P}.~{T}urán concerning graphs.
\newblock {\em Fund. Math.}, 41:137--145, 1955.
\newblock \href {https://doi.org/10.4064/fm-41-1-137-145}
  {\path{doi:10.4064/fm-41-1-137-145}}.

\end{thebibliography}

\end{document}